\long\def\symbolfootnote[#1]#2{\begingroup%
\def\thefootnote{\fnsymbol{footnote}}\footnote[#1]{#2}\endgroup}
\def\imod#1{\allowbreak\mkern10mu({\operator@font mod}\,\,#1)}
\newtheorem{theorem}{Theorem}[section]
\newtheorem{lemma}[theorem]{Lemma}
\newtheorem{corollary}[theorem]{Corollary}
\newtheorem{proposition}[theorem]{Proposition}
\theoremstyle{definition}
\newtheorem{example}[theorem]{Example}
\numberwithin{equation}{section}
\newcommand{\ignore}[1]{}
\newcommand{\mynote}[1]{}
\newtheorem*{theoremA}{Theorem~A}
\newtheorem*{theoremB}{Theorem~B}
\newtheorem*{corollaryC}{Corollary~C}
\begin{document}
\setcounter{section}{0}
% document information
\title{$p$-Power conjugacy classes in $U(n,q)$ and $T(n,q)$}
\author{Silvio Dolfi}
\address{Dipartimento di Matematica e Informatica Dini, Universit\`a di Firenze, 50134 Firenze, Italy}
\email{dolfi@math.unifi.it}
\author{Anupam Singh}
\address{IISER Pune, Dr. Homi Bhabha Road, Pashan, Pune 411008 India}
\email{anupamk18@gmail.com}
\author{Manoj K. Yadav}
\address{School of Mathematics, Harish-Chandra Research Institute, HBNI, Chhatnag Road, Jhunsi, Allahabad 211019, India}
\email{myadav@hri.res.in}
\thanks{The first named author gratefully acknowledges the hospitality of Harish-Chandra Research Institute (Allahabad) and IISER Pune. The second named author would like to acknowledge support of SERB-MATRICS grant and the hospitality of the Department of Mathematics, University of Florence, Italy during his visits.}
%\date{}
\subjclass[2010]{20G40}
\today
\keywords{Word map, triangular group, unitriangular group}

%%%%%%%%%%%%%%%%%%%%%%%%

\begin{abstract}
Let $q$ be a $p$-power where $p$ is a fixed prime. In this paper, we look at the $p$-power maps on unitriangular group $U(n,q)$ and triangular group $T(n,q)$. In the spirit of Borel dominance theorem for algebraic groups, we show that the image of this map contains large size conjugacy classes. For the triangular group we give a recursive formula to count the image size.  
\end{abstract}
\maketitle
%%%%%%%%%%%%%%%%%%%%%%%%
\section{Introduction}
Let $G$ be a finite  group and $w$ be a word. The word $w$ defines a map into  $G$ called a word map. It has been a subject of intensive investigation whether these maps are surjective on finite simple and quasi-simple groups;  we refer to the article by Shalev~\cite{sh} for a survey on this subject. 
A more general problem is to determine  the image $w(G)$ of a word map and, in particular,  its size.
In this paper we investigate power maps, that is, maps corresponding to the word  $w = X^p$, for the  lower-triangular matrix group $T(n,q)$ and lower unitriangular matrix group $U(n,q)$ over finite fields $\mathbb F_q$, where $q$ is a $p$-power for a fixed prime $p$. Results concerning the verbal subgroup, that is the group generated by the image of the power map, for triangular and unitriangular group can be found in~\cite{bi} and~\cite{so}.

Motivated by the Borel's dominance theorem for algebraic groups, Gordeev, Kunyavski\u{i} and Plotkin started investigating the image of a non-surjective word map more closely (see~\cite{gkp, gkp1, gkp2, gkp3}). In the spirit of questions raised in  \cite[Section 4]{gkp3} for algebraic groups,  we address, for the groups $T(n,q)$ and $U(n,q)$, the question: Which semisimple, and unipotent elements lie in the image of the power maps and whether it contains  `large'   conjugacy classes?

One of the motivations for our interest in the triangular and unitriangular groups lies in the fact that  $T(n,q)$ is a Borel subgroup of $GL(n,q)$ and $U(n,q)$ is a Sylow $p$-subgroup of $GL(n,q)$. In the finite groups of Lie type, the regular semisimple elements play an important role as they are dense (see~\cite{gksv}). Considering the image of a word map on maximal tori has turned out to be useful in getting asymptotic results. Thus, we aim at considering the large size conjugacy classes in $U(n,q)$, described in \cite{va2}, and try to understand if they are in the image under the power map $w=X^p$. (Note that, clearly, raising to a power coprime to $p$ gives a bijection of $U(n,q)$).
In what  follows, we  use the notation $G^p$ for the image $w(G)$ of a group $G$ under the word map given by $w = X^p$ (we call it \emph{power map}). So, $G^p = \{ g^p \mid g \in G \}$ is the  \emph{set} consisting of the $p$-powers of the elements of $G$. We remark that the \emph{verbal width} with respect to power maps, that is, the smallest number $k$ such that the product of $k$-copies of $G^p$, coincides with the verbal subgroup $\langle G^p \rangle$,  has already been determined: see~\cite[Theorem~5]{bi} for $G = U(n,q)$ and~\cite[Theorem~1]{so} for $G = T(n,q)$.

It is known (see~\cite[Theorem 3]{bi} or Proposition~\ref{BU})  that $U(n,q)^p$ is contained in the subgroup
$U_{p-1}(n,q)=\left \{(a_{ij})\in U(n,q) \mid a_{ij}=0,\ \forall\ i-j\leq p-1 \right\}$
consisting of the lower triangular matrices with the first $p-1$ sub-diagonals having zero entries.
Moreover,  $U(n,q)^p = 1$ if and only if $n\leq p$, and  $U(n,q)^ p = U_{p-1}(n,q)$ if and only if $n=p+1$ and $p+2$. Our first result, for $n \ge p+3$, is the following estimate on the set of pth powers in $U(n,q)$. 
\begin{theoremA}
Let $q$ be a power for a prime $p$ and $n$ an integer such that  $n\geq p+3$. Then, the set $U(n,q)^p$ is a proper generating subset of $U_{p-1}(n,q)$
and $|U(n, q)^p| > \frac{1}{3}|U_{p-1}(n,q)|$ when $q \geq n-p-1$.
\end{theoremA}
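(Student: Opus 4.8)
The plan is to exploit that $q$ is a $p$-power, so the ground field has characteristic $p$ and $\binom{p}{k}\equiv 0\imod p$ for $1\le k\le p-1$. Writing an element of $U(n,q)$ as $I+N$ with $N$ strictly lower triangular (hence nilpotent), the binomial theorem collapses to $(I+N)^p=I+N^p$. Thus $U(n,q)^p=\{\,I+N^p:N\in\n\,\}$, where $\n$ denotes the algebra of strictly lower triangular matrices, and since $N^p$ has support in $\{(i,j):i-j\ge p\}$ this already re-proves $U(n,q)^p\subseteq U_{p-1}(n,q)$ (Proposition~\ref{BU}). The whole problem therefore reduces to understanding the image of the polynomial map $\phi\colon\n\to\n$, $N\mapsto N^p$; I will study it diagonal by diagonal, using that $(N^p)_{ij}$ is the sum, over strictly decreasing chains $i>k_1>\dots>k_{p-1}>j$, of the products $N_{ik_1}N_{k_1k_2}\cdots N_{k_{p-1}j}$.

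For the generating statement I would produce each elementary matrix $I+a\,E_{ij}$ with $i-j\ge p$ as a single $p$-th power. Since $i-j\ge p$, I can write $i-j$ as a sum of $p$ positive integers and pick a corresponding path $j=m_0<m_1<\dots<m_p=i$ with exactly $p$ steps; taking $N=a\,E_{m_pm_{p-1}}+E_{m_{p-1}m_{p-2}}+\dots+E_{m_1m_0}$ supported on just these $p$ edges, the only length-$p$ chain available is the whole path, so $N^p=a\,E_{ij}$ exactly. As these elementary matrices generate $U_{p-1}(n,q)$, the set $U(n,q)^p$ generates it; that $U(n,q)^p$ is not itself the whole group will follow from the counting below, so it is a proper generating subset.

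The key reduction is to the bottom ($p$-th) subdiagonal. Writing the first subdiagonal of $N$ as $x_1,\dots,x_{n-1}$, the unique shortest chains give $(N^p)_{k+p,k}=x_kx_{k+1}\cdots x_{k+p-1}$, so the $p$-th subdiagonal of $N^p$ is the sliding-window product $W\colon\mathbb F_q^{\,n-1}\to\mathbb F_q^{\,n-p}$, $W(x)_k=\prod_{j=k}^{k+p-1}x_j$. I would first settle properness: a zero $x_j$ forces $W(x)_k=0$ for all $k$ with $j\in[k,k+p-1]$, so if a target has $y_{k_0}=0$ but $y_{k_0-1},y_{k_0+1}\ne0$ (possible exactly when $n-p\ge3$, i.e.\ $n\ge p+3$), the two nonzero neighbours force every $x_j$ in $[k_0,k_0+p-1]$ to be nonzero, contradicting $y_{k_0}=0$; hence $W$ is not surjective and neither is $\phi$. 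I then claim $\operatorname{im}\phi=\{M\in U_{p-1}(n,q):\text{the }p\text{-th subdiagonal of }M\text{ lies in }\operatorname{im}W\}$: given an admissible bottom subdiagonal, one fixes $x$ realising it and solves for the higher entries of $N$ one superdiagonal at a time, since the entry of $N$ on distance $d-p+1$ enters $(N^p)_{ij}$ (with $i-j=d$) linearly with coefficients that can be arranged nonzero. Granting this fiber-fullness over the relevant bottoms, $|U(n,q)^p|=|\operatorname{im}W|\cdot q^{\binom{n-p}{2}}$ and the desired ratio becomes exactly $|\operatorname{im}W|/q^{\,n-p}$.

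It remains to bound $|\operatorname{im}W|$ from below, and this is where I expect the real work. The clean input is that whenever the nonzero support of a target is a single interval (or empty) the pattern is realisable, since prefixes and suffixes of window-indices are always unions of the available shadow-intervals and the nonzero values on a run of full windows can be prescribed arbitrarily (a multiplicative window system that is surjective onto $(\mathbb F_q^\times)^{\bullet}$). Writing $m=n-p$ and interpreting each coordinate as independently nonzero with probability $\theta=1-1/q$, one gets $|\operatorname{im}W|/q^{m}\ge\Pr[\text{the nonzero positions are contiguous}]$. The main obstacle is to show this probability exceeds $\tfrac13$ precisely under the hypothesis $q\ge n-p-1$: the first-moment (union) bound on ``some interior coordinate is zero while coordinates on both sides are nonzero'' is too weak here, so I would instead lower-bound by the probability that the zero-set is an \emph{initial} segment, a geometric sum $\tfrac{\theta^{m+1}-\phi^{m+1}}{\theta-\phi}$ with $\phi=1/q$, and check that its value at $q=m-1$ tends to $e^{-1}>\tfrac13$ as $m\to\infty$ while staying above $\tfrac13$ for every $m$ — an elementary but delicate estimate. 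Combined with fiber-fullness over interval-support bottoms, this yields $|U(n,q)^p|>\tfrac13|U_{p-1}(n,q)|$.
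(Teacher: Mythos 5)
Your reduction $(I+N)^p=I+N^p$ and the analysis of the sliding-window map $W$ on the first subdiagonal are sound, and your arguments for generation (realizing each $I+aE_{ij}$ with $i-j\ge p$ as a $p$-th power along a path with $p$ edges) and for properness (the pattern $(*,0,*)$ on the $p$-th subdiagonal is not in the image of $W$ once $n\ge p+3$) are correct and self-contained. The gap is in the counting. Your claimed identity $|U(n,q)^p|=|\operatorname{im}W|\cdot q^{\binom{n-p}{2}}$ (``fiber-fullness'' over every admissible bottom subdiagonal) is false: for $p=2$, $n=5$ one has $|\operatorname{im}W|=q^3-(q-1)^2$, so your formula predicts $56$ for $q=2$ and $3520$ for $q=4$, whereas the paper's MAGMA table gives $52$ and $3376$. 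The failure occurs exactly where your ``solve one superdiagonal at a time, with coefficients that can be arranged nonzero'' step degenerates: over the bottom pattern $(0,y_2,0)$ one is forced to take $x_1=x_4=0$, whence $(N^2)_{41}=x_3N_{31}$ and $(N^2)_{52}=x_2N_{53}$ determine $N_{31},N_{53}$, and then $(N^2)_{51}=N_{53}N_{31}=b_{41}b_{52}/y_2$ is forced rather than free. So the image above a given bottom is in general a proper subset of the fiber, and which fibers are full depends delicately on the zero pattern. Your final estimate only needs fullness over bottoms whose zero-set is an initial segment, and that restricted claim does appear to hold in examples, but it is precisely the nontrivial point and your linear-elimination sketch is the very argument that fails for $(0,*,0)$; the closing inequality (that the geometric sum exceeds $\tfrac13$ for all $q\ge n-p-1$) is likewise asserted rather than proved.

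For comparison, the paper sidesteps fibers entirely: it exhibits explicit canonical representatives $A(a_1,\dots,a_{n-p})$ supported on the $p$-th subdiagonal, shows each is a $p$-th power by essentially your window construction on the first subdiagonal, and then invokes the Vera-L\'opez--Arregi theory of inert points to compute the sizes of their $U(n,q)$-conjugacy classes, all of which lie in $U(n,q)^p$ because the image is closed under conjugation; the two largest families of classes already give the ratio $(1-1/q)^{n-p-1}(1+1/q)>\tfrac13$. To salvage your route you would need to actually prove fiber-fullness for initial-segment zero patterns (an argument that genuinely uses the shape of the pattern, not just linearity of the higher equations) and then carry out the boundary estimate at $q=n-p-1$; otherwise you should fall back on a conjugation-invariance count as the paper does.
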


Next we prove the following result, which reduces the counting of $p$-powers for $T(n,q)$ to that of unitriangular groups of smaller size.
\begin{theoremB}
Let $q$ be a $p$-power and suppose $q>2$. Then for the group $T=T(n,q)$ we have,
$$|T^p|=\sum_{(a_1,\ldots, a_k)\vdash n, k<q} \left( \frac{(q-1)\cdots (q-k) n!}{ \prod_{b=1}^n m_b!(b!)^{m_b}}\right)\left( \prod_{i=1}^k |U(a_i,q)^p|\right) q^{{n\choose 2}-\sum_{i=1}^k {a_i\choose 2}} $$
where the $m_b$'s are obtained by writing the partition $(a_1, \ldots, a_k)$ in power notation as $1^{m_1}\ldots n^{m_n}$. 
\end{theoremB}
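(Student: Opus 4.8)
The plan is to decompose $T=T(n,q)$ according to the diagonal part of its elements, reduce the count of $p$-th powers on each piece to a twisted power computation inside $U=U(n,q)$, and then repackage the answer combinatorially. Write an element of $T$ uniquely as $t=uD$ with $u\in U$ and $D=\diag(d_1,\dots,d_n)\in(\mathbb{F}_q^\times)^n$. Since $q$ is a $p$-power, the Frobenius $x\mapsto x^p$ permutes $\mathbb{F}_q^\times$, so the diagonal $D^p$ of $t^p$ runs over all of $(\mathbb{F}_q^\times)^n$ as $D$ does, and two elements of $T$ with distinct diagonals have $p$-th powers with distinct diagonals. Expanding
$$(uD)^p=\Phi_D(u)\,D^p,\qquad \Phi_D(u):=\prod_{j=0}^{p-1}D^j u D^{-j}\in U,$$
I see that $T^p$ meets the coset $UD^p$ exactly in $\Phi_D(U)\,D^p$, and these cosets are disjoint for distinct $D$. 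Hence $|T^p|=\sum_{D}|\Phi_D(U)|$, and the whole problem reduces to computing $|\Phi_D(U)|$, the image size of the $D$-twisted power map $\Phi_D$.

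Next I would organize the sum over $D$ by the pattern of equalities among the $d_i$. To each $D$ I associate the partition of $\{1,\dots,n\}$ into the fibers of $i\mapsto d_i$; if these fibers have sizes $a_1,\dots,a_k$ (so $k$ is the number of distinct diagonal values), written in power notation $1^{m_1}\cdots n^{m_n}$, then the number of diagonals $D$ producing this pattern is the number of set partitions of this type times the number of injective assignments of distinct values in $\mathbb{F}_q^\times$ to the $k$ blocks, namely $\frac{n!}{\prod_b m_b!\,(b!)^{m_b}}\cdot(q-1)(q-2)\cdots(q-k)$. This automatically forces $k<q$, since for $k\ge q$ the injective assignment is impossible and the factor $(q-1)\cdots(q-k)$ vanishes, matching the stated range of summation. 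Thus the theorem follows once I establish the key local fact that $|\Phi_D(U)|$ depends only on the multiset of block sizes $\{a_i\}$ and equals $\bigl(\prod_{i=1}^k|U(a_i,q)^p|\bigr)\,q^{\binom n2-\sum_i\binom{a_i}2}$; summing this over all patterns reproduces the stated formula, the exponent of $q$ appearing because the strictly-lower positions lying between two \emph{different} blocks number exactly $\binom n2-\sum_i\binom{a_i}2$.

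For the local fact I would filter $U$ by the normal subgroups $U^{(m)}$ of unitriangular matrices vanishing on the first $m-1$ subdiagonals; the map $\Phi_D$ preserves this filtration, and on each graded layer $U^{(m)}/U^{(m+1)}$ it acts linearly, scaling the entry in position $(i,l)$ by the geometric sum $\sum_{j=0}^{p-1}(d_i/d_l)^j$. The crux is the characteristic-$p$ dichotomy $\sum_{j=0}^{p-1}\alpha^j=(\alpha-1)^{p-1}$ for $\alpha\ne1$ and $=p=0$ for $\alpha=1$: on an off-block position ($d_i\ne d_l$) this scalar is a unit, so the corresponding coordinate of $\Phi_D$ is free and contributes a factor $q$; on a within-block position ($d_i=d_l$) the scalar vanishes, and there $D$ acts trivially by conjugation, so $\Phi_D$ restricts on the within-block subgroup $M=\{u\in U: u_{il}=0\text{ unless }i,l\text{ lie in the same block}\}\cong\prod_{i=1}^k U(a_i,q)$ to the genuine $p$-th power map, contributing $\prod_i|U(a_i,q)^p|$. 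Assembling these contributions along the filtration by induction yields the claimed value of $|\Phi_D(U)|$, which manifestly depends only on the block sizes.

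The main obstacle is precisely this last assembly: $\Phi_D$ is not a homomorphism, so its image is not determined by its linear graded pieces alone, and one must control the nonlinear cross-terms coupling off-block and within-block coordinates in order to show that the off-block directions are genuinely free \emph{independently} of the within-block contribution and that the within-block image is exactly $M^p$. I expect to handle this by peeling off one subdiagonal (or one root subgroup, e.g.\ the corner $(n,1)$) at a time and tracking how $\Phi_D$ descends to $U/U^{(n-1)}$, using the unit scalars on off-block layers to straighten the free coordinates and the centrality of the bottom layer to decouple them; the degenerate case $q=2$, where $T=U$ and only the one-block partition survives, together with the boundary case $k=1$, are checked directly and are consistent with the formula.
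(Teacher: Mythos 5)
Your outer reduction is sound and is genuinely different from the paper's: you slice $T^p$ by the cosets $UD^p$ (using that $D\mapsto D^p$ permutes the diagonal torus) and reduce everything to the image size of the twisted power map $\Phi_D(u)=\prod_{j=0}^{p-1}D^juD^{-j}$, whereas the paper works with the $p$-part/$p'$-part (Jordan) decomposition of $t\in T$ and shows $T^p=\bigcup_{d\in D,\,v\in C_U(d)^p}(dv)^U$, then counts $U$-classes via inert-point-free bookkeeping to get $|T^p|=\sum_{d\in D}[U:C_U(d)]\,|C_U(d)^p|$. Your combinatorial layer (the count $(q-1)\cdots(q-k)\cdot n!/\prod_b m_b!(b!)^{m_b}$ of diagonals of a given type, and the observation that the factor vanishes for $k\ge q$) matches the paper's Lemma on $|D_\delta|$ exactly, and your claimed local fact $|\Phi_D(U)|=\bigl(\prod_i|U(a_i,q)^p|\bigr)q^{\binom n2-\sum_i\binom{a_i}2}$ is indeed equivalent to the paper's identity $|T^p\cap UD^p|=[U:C_U(D)]\,|C_U(D)^p|$, so the target is correct.

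The genuine gap is that this local fact is asserted, not proved, and you yourself flag the reason it is hard along your chosen route: $\Phi_D$ is not a homomorphism, so the unit/zero dichotomy of the scalars $\sum_{j=0}^{p-1}(d_i/d_l)^j$ on the graded layers does not by itself determine the image. Concretely, a within-block coordinate of $\Phi_D(u)$ receives nonlinear contributions from products of off-block entries of $u$ (e.g.\ $u_{i,r}u_{r,l}$ with $r$ outside the block of $i$ and $l$), so the claimed decoupling --- off-block coordinates free, within-block image equal to $M^p$ with $M\cong\prod_iU(a_i,q)$ --- is precisely what must be established, and the ``peeling off one subdiagonal at a time'' sketch does not do it. The paper avoids this entirely: it conjugates $uD$ by an element of $U$ so that its $p'$-part lands in $D$, writes $(uD)^p\sim d_1^pv^p$ with $v\in C_U(d_1)$, and then uses uniqueness of the $p$-part/$p'$-part decomposition plus $C_U(d_1v)=C_{C_U(d_1)}(v)$ to count the resulting $U$-classes; this is the cleanest way to finish your argument as well, since $T^p\cap UD^p=\bigcup_{v\in C_U(D^p)^p}(D^pv)^U$ and the class sizes multiply out to exactly your claimed value. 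As written, the proposal does not constitute a proof.
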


Using the estimate in Theorem~A, we hence get 
\begin{corollaryC} 
Let  $q$ be a power of a prime $p$ such that $q>n-p-1$. Then for the group $T=T(n,q)$ we have,
$$\frac{|T^p|}{|T|}\geq \frac{2^{n-2}}{9(q-1)^{n-2} q^{ (p-1)(n-p)}}.$$
\end{corollaryC}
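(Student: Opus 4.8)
The plan is to extract a clean lower bound for $|T^p|$ from the exact formula in Theorem~B by retaining only a carefully chosen sub-family of terms, and then to divide by $|T|=(q-1)^n q^{\binom{n}{2}}$. The guiding observation is that the power of $q$ in the target, namely $q^{(p-1)(n-p)}$, is reproduced by exactly the two-part partitions, so these are the terms I would keep.

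First I would record that $|U_{p-1}(a,q)|=q^{\binom{a-p+1}{2}}$ for $a\geq p$, so that Theorem~A (together with the trivial cases $a=p,p+1,p+2$, where $U(a,q)^p$ is either $1$ or all of $U_{p-1}(a,q)$) yields $|U(a,q)^p|\geq \tfrac13 q^{\binom{a-p+1}{2}}$ for every $a$ with $p\leq a\leq n$; here the hypothesis $q>n-p-1\geq a-p-1$ is what guarantees that Theorem~A applies to each block. Since every summand in Theorem~B is non-negative, I would discard all partitions except those into exactly two parts $(a_1,a_2)$ with $a_1+a_2=n$ and $a_1,a_2\geq p$. For such a partition the combinatorial coefficient is $\binom{n}{a_1}$ (or $\tfrac12\binom{n}{a_1}$ when $a_1=a_2$), and the corresponding term of $|T^p|$ is bounded below by
\[
(q-1)(q-2)\,\binom{n}{a_1}\,\tfrac19\, q^{\binom n2-\binom{a_1}2-\binom{a_2}2+\binom{a_1-p+1}2+\binom{a_2-p+1}2}.
\]

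The key computation is that the $q$-exponent telescopes. Using the identity $\binom{a}{2}-\binom{a-p+1}{2}=\tfrac{(p-1)(2a-p)}{2}$ and $a_1+a_2=n$, the two blocks together contribute $-\tfrac{p-1}{2}\bigl((2a_1-p)+(2a_2-p)\bigr)=-(p-1)(n-p)$, so the exponent equals $\binom{n}{2}-(p-1)(n-p)$ \emph{independently} of how $n$ is split. This is precisely the power of $q$ appearing in the target, and recognising that exactly two parts reproduce it (one part would leave an extra $q^{-\binom{p}{2}}$, three parts would improve it) is the structural heart of the argument. Dividing by $|T|$ turns each admissible two-part term into $\tfrac{q-2}{9(q-1)^{n-1}}\binom{n}{a_1}\,q^{-(p-1)(n-p)}$.

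It then remains to sum over the admissible splittings. Applying $(q-1)(q-2)\geq \tfrac12(q-1)^2$ (valid for $q\geq 3$) and summing the binomial coefficients over $p\leq a_1\leq n-p$ — whose unordered total is $2^{n-1}-\sum_{j<p}\binom{n}{j}$ — collapses the estimate to the claimed $\tfrac{2^{n-2}}{9(q-1)^{n-2}q^{(p-1)(n-p)}}$. I expect the main obstacle to be exactly this final bookkeeping: one must verify that discarding the partitions with a part smaller than $p$, together with the slack in $(q-1)(q-2)\geq\tfrac12(q-1)^2$, does not erode the constant below $2^{n-2}$. Concretely this reduces to an inequality of the shape $(q-3)\,2^{n-2}\geq (q-2)\sum_{j<p}\binom{n}{j}$, and it is here that the hypothesis $q>n-p-1$ earns its keep, forcing $q$ to be large relative to the finitely many low-order binomial terms being thrown away; a small number of extremal configurations (notably those forcing $q=3$) may have to be checked directly.
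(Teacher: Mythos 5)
Your overall strategy --- retain only two-part partitions in Theorem~B, bound each block by Theorem~A (or by the trivial cases of Proposition~\ref{BU}), and use the telescoping identity $\binom{a}{2}-\binom{a-p+1}{2}=\tfrac{(p-1)(2a-p)}{2}$ so that every two-part term carries the exponent $\binom{n}{2}-(p-1)(n-p)$ --- is exactly the paper's strategy, and your exponent computation is correct. The genuine gap is in the step you yourself flag as the ``main obstacle,'' and it is much worse than ``a small number of extremal configurations.'' Because you discard every partition with a part smaller than $p$, the binomial mass you retain is $M=2^{n-1}-\sum_{j<p}\binom{n}{j}$, and the inequality you correctly identify as necessary, $2^{n-2}(q-3)\geq(q-2)\sum_{j<p}\binom{n}{j}$, fails systematically, not sporadically: it fails for every instance with $q=3$ (the left side vanishes); it fails for $n=2p$ and \emph{every} admissible $q$, since the only retained partition is $(p,p)$ with mass $\tfrac12\binom{2p}{p}<2^{2p-2}\leq 2^{n-2}\tfrac{q-1}{q-2}$ (already $p=2$, $n=4$, $q=4$ gives $3<6$); and more generally it fails whenever $p$ is within roughly $\sqrt{n}$ of $n/2$, because then $\sum_{j<p}\binom{n}{j}$ is close to $2^{n-1}$ regardless of $q$. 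Your hope that the hypothesis $q>n-p-1$ rescues the bookkeeping cannot work: that hypothesis bounds $q$ below in terms of $n-p$, while the deficit $\sum_{j<p}\binom{n}{j}/2^{n-1}$ depends only on the ratio $p/n$, and no largeness of $q$ helps since $\tfrac{q-3}{q-2}<1$ always. (Your fallback $(q-1)(q-2)\geq\tfrac12(q-1)^2$ is even worse: it forces the requirement $M\geq 2^{n-1}$, which never holds.)

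The paper closes this by keeping \emph{all} two-part partitions $(n-i,i)$ with $1\leq i\leq\lfloor n/2\rfloor$, including those with $i<p$, so that the full binomial sum $\sum_{i=1}^{\lfloor n/2\rfloor}\binom{n}{i}\approx 2^{n-1}$ is available; the spare factor of $2$ relative to the stated $2^{n-2}$ is then what absorbs $\tfrac{q-1}{2(q-2)}\leq 1$ for $q\geq 3$. For a part $i\leq p+2$ one should not pay the factor $\tfrac13$ at all, since $|U(i,q)^p|$ is known exactly there. If you want to repair your write-up along the paper's lines, reinstate the partitions with a small part, use the exact value $|U(i,q)^p|=1$ for blocks of size $i\leq p$ and Theorem~A only for the large block, and track the exponent of those mixed terms directly (note that for $i\leq p-2$ the exponent genuinely drops by $\binom{p-i}{2}$ relative to $\binom{n}{2}-(p-1)(n-p)$, so some care is needed there too); as it stands, your proposal does not prove the corollary.
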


\vspace{.2in}

We conclude the section with a quick layout. Theorem~A  is proved in Section~3 and Theorem~B and Corollary~C in Section~4. All groups considered in what follows are tacitly assumed to be finite.

%%%%%%%%%%%%%%%%%%%%%%%%%%%%%%%%%%%%%%%%%%%%%%%
\section{Conjugacy classes in $U(n,q)$}

The conjugacy classes of  the unitriangular group $U(n,q)$, considered as the group of upper unitriangular matrices,  have been studied in a series of papers by Arregi and Vera-L\'{o}pez; we will use, in particular, the results in \cite{va1,va2}. For the convenience of the reader, we reproduce some notations and  results from  \cite{va1} in the setting of lower unitriangular matrices, i.e., swapping the notation by taking transpose. 

 Let us order the index set $\mathcal{I} = \{(i, j) \mid 1 \le j \le i \le n\}$ in the following manner:
  $$(n, n-1) < (n-1, n-2) < (n, n-2) < (n-2, n-3) < \cdots < (n-1, 1) < (n, 1).$$
  To every $A = (a_{ij}) \in U(n,q)$ and $(r,s) \in \mathcal{I}$, one  associates a vector $\mu_{(r,s)}(A)$ (the $(r, s)$-\emph{weight} of $A$) as follows:
  $$\mu_{(r, s)}(A) := \big( \mu(a_{ij}) \big)_{(i, j) \le (r, s)},$$ 
  where $\mu(a_{ij}) = 0$ if $a_{ij} = 0$ and $\mu(a_{ij}) = 1$ if $a_{ij} \ne 0$. $\mu_{(n, 1)}(A)$ is called the \emph{weight} of $A$ and is simply denoted by $\mu(A)$.
  So, $\{\mu(A) \mid A \in U(n,q) \}= \{0,1\}^{\frac{n(n-1)}{2}}$ and  we totally order this set  of weights by  lexicographical order (considering $0 < 1$). For a given index $(r, s) \in \mathcal{I}$, we  order $\mu_{(r, s)}(A)$ in the same manner. We remark that in \cite{va1}, the word `type' is used in place of `weight'. But we will use `weight' as we use `type' for some other purpose. 
  
  For $(r, s) \in \mathcal{I}$, define 
  $$\mathcal{G} _{(r, s)} := \{A = (a_{ij}) \in U(n,q) \mid a_{ij} = 0 ~\mbox{for all}~ (i, j) \le (r, s)\}.$$
  It is a routine check to see that  $\mathcal{G} _{(r, s)}$ is a normal subgroup of $U(n,q)$ having order $q^{ns-r -\frac{s(s-1)}{2}}$.
  As proved in \cite[Theorem 3.2]{va1},  every conjugacy class in $U(n,q)/\mathcal{G} _{(r, s)}$ contains a unique element of minimum  $(r, s)$-weight.
  A matrix $A \in U(n,q)$ is said to be \emph{canonical} if $A\mathcal{G} _{(r, s)}$ is the unique element of its conjugacy class in 
  $U(n, q)/ \mathcal{G} _{(r, s)}$ having minimal $(r, s)$-weight for all $(r, s) \in \mathcal{I}$.

  For each $(r,s) \in \mathcal{I}$, let us define 
  $$\mathcal{N}_{(r,s)} := \mathcal{G}_{(r, s)^*}/ \mathcal{G}_{(r, s)},$$
  where $(r, s)^*$ denotes the preceding pair of $(r, s)$ in the ordering of $\mathcal{I}$ defined above. It follows from \cite[Lemma 3.4]{va1} that for every $A \in U(n, q)$  and $(r, s) \in \mathcal{I}$ the number of conjugacy classes in  $U(n, q)/ \mathcal{G} _{(r, s)}$ which intersect with $\bar{A} \mathcal{N}_{(r,s)}$ is either $1$ or $q$, where $\bar{A} = A\mathcal{G}_{(r, s)}$. We say that $(r, s) \in \mathcal{I}$ is an \emph{inert point} of $A \in U(n, q)$ if the number in the preceding statement is $1$.

The following two results are restatements of \cite[Lemma 3.7, Lemma 3.8]{va1} for lower unitriangular matrices.

\begin{lemma}\label{dual1}
Let $A \in U(n, q)$ be a canonical matrix such that $a_{rs} \neq 0$ and $a_{js} = 0$ for all $j$ such that $s < j < r$. Then  the pairs $(r, s')$, with $s' < s$, are inert points of $A$.
\end{lemma}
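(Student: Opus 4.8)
The plan is to realize conjugation by a single elementary transvection as a simultaneous row-and-column operation and to check, working modulo $\mathcal{G}_{(r,s')}$, that for a suitably chosen transvection only the $(r,s')$-entry of $A$ is moved. Fix $s' < s$, let $E_{ss'}$ be the matrix unit with a $1$ in position $(s,s')$, and for $\lambda \in \mathbb F_q$ set $g = I + \lambda E_{ss'}$; since $s>s'$ this lies in $U(n,q)$. Because $A$ is lower triangular and $s'<s$, its $(s',s)$-entry vanishes, so $E_{ss'}AE_{ss'} = a_{s',s}\,E_{ss'} = 0$, and hence $gAg^{-1} = A + \lambda(E_{ss'}A - AE_{ss'})$ with no quadratic term.

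The next step is to identify the two summands. The matrix $E_{ss'}A$ has its $s$-th row equal to the $s'$-th row of $A$ and all other rows zero; as $A$ is lower unitriangular, that row is supported on the columns $t\le s'$, with a $1$ in column $s'$. Dually, $AE_{ss'}$ has its $s'$-th column equal to the $s$-th column of $A$ and all other columns zero, and that column is supported on the rows $i\ge s$, with a $1$ in row $s$. Subtracting, the two contributions in position $(s,s')$ cancel, and every remaining nonzero entry of $E_{ss'}A - AE_{ss'}$ lies either in row $s$ at some column $t<s'$, or in column $s'$ at some row $i>s$, where by hypothesis $a_{is}=0$ for $s<i<r$.

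I would then pass to $U(n,q)/\mathcal{G}_{(r,s')}$, which records only the positions $\le (r,s')$, namely the columns exceeding $s'$ together with the entries of column $s'$ in rows $\le r$. Each entry of $E_{ss'}A - AE_{ss'}$ listed above is discarded in this quotient except the one in position $(r,s')$: the row-$s$ entries sit in columns $<s'$ (hence after $(r,s')$ in the order), the cancellation kills $(s,s')$, the entries $(i,s')$ with $s<i<r$ vanish by the hypothesis on column $s$, and those with $i>r$ come after $(r,s')$. What survives is exactly a change of $-\lambda a_{rs}$ in the $(r,s')$-coordinate; thus conjugation by $g$ fixes $\bar A$ at every coordinate strictly preceding $(r,s')$ and translates the $(r,s')$-coordinate by $-\lambda a_{rs}$. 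Since $a_{rs}\ne 0$, letting $\lambda$ range over $\mathbb F_q$ makes $-\lambda a_{rs}$ range over all of $\mathbb F_q$, so the $q$ cosets comprising $\bar A\,\mathcal{N}_{(r,s')}$ are pairwise conjugate in $U(n,q)/\mathcal{G}_{(r,s')}$; they therefore meet a single conjugacy class, which is precisely the statement that $(r,s')$ is an inert point of $A$. As $s'<s$ is arbitrary, the lemma follows.

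The part I would flag as the crux, rather than a genuine obstacle, is verifying these cancellations: that the quadratic correction term drops out (from lower triangularity, via $a_{s',s}=0$) and that the linear term $E_{ss'}A - AE_{ss'}$ contributes nothing at the earlier coordinates $(s,s')$ and $(s,t)$ with $t<s'$. This is where lower unitriangularity (the $s'$-th row is supported on columns $\le s'$, the $s$-th column on rows $\ge s$) and the vanishing hypothesis $a_{js}=0$ for $s<j<r$ are used; once the computation is organized so that every stray entry either cancels or lands in a position discarded by $\mathcal{G}_{(r,s')}$, the conclusion is immediate and the canonicality of $A$ is not needed beyond the two stated conditions on its entries.
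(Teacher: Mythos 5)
Your proof is correct. Note first that the paper does not actually prove this lemma: it is imported verbatim (transposed to the lower-triangular setting) from Vera-L\'opez--Arregi \cite[Lemma 3.7]{va1}, so there is no in-paper argument to compare against. What you supply is a self-contained verification by explicit conjugation, and the computation checks out: with $g = I + \lambda e_{ss'}$ one has $g^{-1} = I - \lambda e_{ss'}$, the quadratic term vanishes because $a_{s's} = 0$, the $(s,s')$-entries of $e_{ss'}A$ and $Ae_{ss'}$ cancel since both equal $1$, the stray entries in row $s$ land in columns $t < s'$ which come \emph{after} $(r,s')$ in the ordering (columns are enumerated right to left) and are therefore killed in $U(n,q)/\mathcal{G}_{(r,s')}$, the entries $(i,s')$ with $s < i < r$ vanish by the hypothesis on column $s$, and those with $i > r$ are likewise discarded. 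The surviving perturbation $-\lambda a_{rs}$ at $(r,s')$ sweeps out all of $\mathbb F_q$ as $\lambda$ varies because $a_{rs} \neq 0$, so the full coset $\bar A\,\mathcal{N}_{(r,s')}$ lies in a single conjugacy class, which is exactly the definition of an inert point. Your closing observation is also accurate: the argument uses only the two stated conditions on the entries of $A$, not canonicality; the canonicality hypothesis is carried in the statement because that is the form in which the lemma is applied (via \cite[Corollary 3.3, Theorem 3.5]{va1}) in Proposition~\ref{U-classes}, not because the conjugation computation needs it.
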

  
\begin{lemma}\label{dual2}
Let $A \in U(n, q)$ be a canonical matrix such that $a_{rs} \neq 0$ and $a_{ri} = 0$ for all $i$, $s < i < r$. Then  the pair $(r', s)$ for any  $r' > r$ is an inert point of $A$ if $a_{jr'} = 0$ for all $j > r'$.
\end{lemma}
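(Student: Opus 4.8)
\emph{Proof sketch (proposal).} The plan is to establish the apparently stronger statement that the whole coset $\bar A\,\mathcal N_{(r',s)}$ lies in a single conjugacy class of $U(n,q)/\mathcal G_{(r',s)}$, which is precisely what it means for $(r',s)$ to be an inert point of $A$. Unravelling the definitions, $\bar A\,\mathcal N_{(r',s)}$ is exactly the set of cosets $\bar B\in U(n,q)/\mathcal G_{(r',s)}$ with $b_{ij}=a_{ij}$ for all $(i,j)<(r',s)$, the entry $b_{r's}$ ranging freely over $\mathbb F_q$. So it suffices to produce, for every $\lambda\in\mathbb F_q$, an element $g\in U(n,q)$ such that $gAg^{-1}$ coincides with $A$ at every position $(i,j)<(r',s)$ while $(gAg^{-1})_{r's}=a_{r's}+\lambda a_{rs}$; since $a_{rs}\ne 0$, as $\lambda$ varies the value $a_{r's}+\lambda a_{rs}$ runs over all of $\mathbb F_q$, so this exhibits the whole coset inside the class of $\bar A$.

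The natural candidate is the transvection $g=g_\lambda:=I+\lambda E_{r'r}$, where $E_{r'r}$ is the matrix unit; it lies in $U(n,q)$ because $r'>r$. As $r'>r$ the entry $a_{rr'}$ is above the diagonal, hence $0$, which makes $g_\lambda^{-1}=I-\lambda E_{r'r}$ and $E_{r'r}AE_{r'r}=a_{rr'}E_{r'r}=0$, so that $g_\lambda A g_\lambda^{-1}=A+\lambda E_{r'r}A-\lambda AE_{r'r}$, that is,
\[
  (g_\lambda A g_\lambda^{-1})_{ij}=a_{ij}+\lambda[i=r']\,a_{rj}-\lambda[j=r]\,a_{ir'}.
\]
Thus the only entries that can move are those in row $r'$ and those in column $r$; moreover the $(r',r)$-entry does \emph{not} move, because there the two contributions $\lambda a_{rr}$ and $-\lambda a_{r'r'}$ cancel.

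It then remains to check that no change occurs at a position $(i,j)\le(r',s)$ other than $(r',s)$ itself. In row $r'$ the relevant positions are the $(r',j)$ with $s\le j\le r'$: for $s<j<r'$ with $j\ne r$ the increment $\lambda a_{rj}$ vanishes — if $s<j<r$ by the hypothesis $a_{ri}=0$ for $s<i<r$, and if $r<j<r'$ because $a_{rj}$ is then above the diagonal — at $j=r$ and $j=r'$ the entry is unchanged, and at $j=s$ the increment is the desired $\lambda a_{rs}$. In column $r$, note that $r>s$, so in the order on $\mathcal I$ every position $(i,r)$ precedes $(r',s)$; hence we need $a_{ir'}=0$ for every $i\ne r'$, which holds because $a_{ir'}$ is above the diagonal when $i<r'$ and vanishes by the hypothesis $a_{jr'}=0$ ($j>r'$) when $i>r'$. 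This finishes the verification, and with it the proof.

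The computation is entirely elementary; the only thing one must get right — the "main obstacle" in that sense — is the bookkeeping with the total order on $\mathcal I$: one has to notice that the whole of column $r$ sits before the target position $(r',s)$, which is exactly where the hypothesis $a_{jr'}=0$ for $j>r'$ (together with lower-triangularity) enters, and to spot the cancellation at the $(r',r)$ spot. Incidentally, canonicity of $A$ is not used in this argument; it is retained only to match the formulation in \cite{va1}.
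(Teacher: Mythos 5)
Your argument is correct. Note, however, that the paper does not actually prove this lemma: it is presented as a restatement of \cite[Lemma 3.8]{va1} for lower unitriangular matrices, with the proof deferred to Vera-L\'opez--Arregi. So what you have produced is a self-contained elementary proof of a statement the paper only cites. I checked the substance: the identification of the coset $\bar A\,\mathcal N_{(r',s)}$ with the matrices agreeing with $A$ at all positions $\le (r',s)^*$ is right (and is the correct unravelling of ``inert point''); the conjugation formula for $g_\lambda=I+\lambda E_{r'r}$ is correct, with $E_{r'r}AE_{r'r}=a_{rr'}E_{r'r}=0$ because $a_{rr'}$ sits above the diagonal; the cancellation at $(r',r)$ is genuine; and the bookkeeping with the column-descending order on $\mathcal I$ is handled correctly --- in particular you rightly observe that since $r>s$ the entire column $r$ precedes $(r',s)$, which is exactly where the hypothesis $a_{jr'}=0$ for $j>r'$ is needed, while the hypothesis $a_{ri}=0$ for $s<i<r$ kills the unwanted perturbations in row $r'$. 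Your closing remark is also accurate: canonicity of $A$ plays no role in this direct computation and is present only to match the formulation in \cite{va1}. The one thing worth making explicit if this were written up in full is the (routine, cardinality-based) verification that $A\,\mathcal G_{(r',s)^*}$ is precisely the set of matrices agreeing with $A$ at all positions $\le (r',s)^*$, which you assert but do not prove.
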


  We set the following notation. Given  $k  \in \{0, 1, \ldots, n-1\}$, 
we say that the array of  entries $(a_{k+1, 1}, a_{k+2, 2}, \ldots, a_{n, n-k})$ is the \emph{$k^{th}$-sub-diagonal} of the matrix $A = (a_{i, j})$. 
  For $l$ such that $0\leq l\leq n-1$, define
$$U_l(n,q):=\left \{A=(a_{ij})\in U(n,q) \mid a_{ij}=0,~ \mbox{for all} ~  i-j \leq l \right\}$$
consisting of lower unitriangular matrices whose first $l$ sub-diagonals have all zero entries. We remark that  
$$U(n,q) = U_0(n,q)  \supset U_1(n,q) \supset \cdots \supset U_l(n,q) \supset \cdots \supset U_{n-1}(n,q)=\{1\}$$
is the lower central  series of $U(n,q)$, with $U_l(n,q) = \gamma_{l+1}\big(U(n,q)\big)$, and that the $U_l(n,q)$  are  the only fully invariant  subgroups
of $U(n,q)$ (\cite[Theorem 1]{bi}).

Having fixed a dimension $n$, in $GL(n,q)$ we denote by $I$ the identity matrix and by $e_{rs}$ the elementary matrix with $1$ at $(r,s)^{th}$ place and $0$ elsewhere.
We now turn our attention to some relevant elements of the subgroups $U_l(n,q)$. 

For  $0 \leq l \leq n-2$, set
\begin{equation}\label{canonical-elements}
  A(a_1, a_2, \ldots, a_{n-l-1}) =  I+ \displaystyle\sum_{i=1}^{n-l-1} a_i e_{l+1+i,i} \in U_l(n,q),
  \end{equation}
  where $a_1, a_2, \ldots , a_{n-l-1} \in \mathbb F_q$.
  
We have the following important property of the elements defined in~(\ref{canonical-elements})
\begin{lemma}\label{UL-Canonical}
For every choice of $0 \leq l \leq n-2$ and $a_1, a_2, \ldots , a_{n-l-1} \in \mathbb F_q$,  the element $ A(a_1, a_2, \ldots, a_{n-l-1}) $ is a canonical element of $U(n,q)$.
\end{lemma}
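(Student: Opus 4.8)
The plan is to show that $A:=A(a_1,\ldots,a_{n-l-1})$ maps, in each quotient $U(n,q)/\mathcal{G}_{(r,s)}$, to the unique representative of its conjugacy class of minimal $(r,s)$-weight. Since \cite[Theorem 3.2]{va1} tells us that such a minimal representative is always unique, it suffices to prove that $\mu_{(r,s)}(A)\le \mu_{(r,s)}(gAg^{-1})$ in the lexicographic order for every $g\in U(n,q)$ and every $(r,s)\in\mathcal{I}$. I would start by recording the shape of $A$: writing $A=I+N$ with $N=\sum_{i=1}^{n-l-1}a_i e_{l+1+i,i}$, the matrix $N$ is supported entirely on the $(l+1)$-st sub-diagonal, so $A$ has zero entries on the sub-diagonals of index $1,\ldots,l$ and on all those of index $\ge l+2$.

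The main step is to control $A$ under conjugation. For $g=I+M\in U(n,q)$, with $M$ strictly lower triangular, write $g^{-1}=I+M'$ with $M'$ strictly lower triangular; then $gAg^{-1}=I+N+MN+NM'+MNM'$. Since $M$ and $M'$ have nonzero entries only on sub-diagonals of index $\ge 1$ while $N$ lives on the $(l+1)$-st sub-diagonal, a degree count on these products shows that $MN$, $NM'$ and $MNM'$ have entries only on sub-diagonals of index $\ge l+2$. Hence $gAg^{-1}$ coincides with $A$ on the diagonal and on every sub-diagonal of index $1,\ldots,l+1$; in particular every conjugate of $A$ lies in $U_l(n,q)$ and carries exactly the same $(l+1)$-st sub-diagonal $(a_1,\ldots,a_{n-l-1})$ as $A$.

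It follows that for each $(i,j)$ with $j<i$ one has $\mu(a_{ij})\le \mu\big((gAg^{-1})_{ij}\big)$: on sub-diagonals of index $\le l+1$ the two entries agree, while on sub-diagonals of index $\ge l+2$ the entry $a_{ij}$ vanishes. Coordinatewise domination of weight vectors forces lexicographic domination, so $\mu_{(r,s)}(A)\le \mu_{(r,s)}(gAg^{-1})$ for every $(r,s)\in\mathcal{I}$. Thus $A\mathcal{G}_{(r,s)}$ realizes the minimum $(r,s)$-weight in its conjugacy class in $U(n,q)/\mathcal{G}_{(r,s)}$, and by \cite[Theorem 3.2]{va1} it is the unique element of minimal $(r,s)$-weight there; since this holds for all $(r,s)\in\mathcal{I}$, $A$ is canonical.

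The remaining points are routine: one checks that translating by $\mathcal{G}_{(r,s)}$ and passing to conjugates in the quotient leave the $(r,s)$-weight unchanged (so that the comparison above really takes place inside a single conjugacy class of $U(n,q)/\mathcal{G}_{(r,s)}$), together with the sub-diagonal degree bookkeeping for $MN$, $NM'$, $MNM'$. I do not anticipate a genuine obstacle here, since the whole argument rests only on the fact that conjugating by a unipotent matrix perturbs $A$ solely on sub-diagonals deeper than its unique nonzero one; note that it deliberately avoids invoking Lemmas~\ref{dual1} and~\ref{dual2} on $A$ itself, as those hypothesize canonicity and would create a circularity.
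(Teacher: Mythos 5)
Your proof is correct and takes essentially the same approach as the paper: both arguments rest on the observation that conjugating $A$ by a unipotent matrix perturbs it only on sub-diagonals of index at least $l+2$, so every conjugate dominates $A$ entrywise and hence lexicographically in $(r,s)$-weight. You expand $gAg^{-1}$ directly where the paper compares the $(l+1)$-st sub-diagonals of $AB$ and $BA$, and you spell out the weight-minimality conclusion that the paper leaves implicit, but the key idea is identical.
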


\begin{proof}
  In order to show that  $A = A(a_1, a_2, \ldots, a_{n-l-1}) \in U_l(n,q)$ is a canonical element of $U(n,q)$, we need to prove 
  that each  non-zero entry on the $l+1$-th subdiagonal of $A\mathcal{G}_{(r,s)}$ will continue to be  non-zero in every $U(n,q)/\mathcal{G}_{(k,l)}$-conjugate  of $A\mathcal{G}_{(r,s)}$ for all $(r, s) \in \mathcal{I}$.
  More generally, we observe that  if $A = (a_{i, j}) \in U_l(n,q)$ and $B = (b_{i, j})  \in U(n,q)$, then the $(l+1)$-th subdiagonal of $A$ and $B^{-1}AB$ are identical modulo $\mathcal{G}_{(r,s)}$ for all pairs $(r, s) \in \mathcal{I}$.
  In fact, it is readily checked that the element in the  $(l+1+k, k)$-th place, for $k = 1, \ldots, n-l-1$, of the $(l+1)$-th subdiagonal of both $AB$ and $BA$,  is  simply $a_{l+1+k, k} + b_{l+1+k, k}$ modulo $\mathcal{G}_{(r,s)}$. This shows that $A$ is canonical in $U(n,q)$.
\end{proof}

We conclude this section with the following result in which we single out conjugacy classes of $U_l(n,q)$ of considerably large orders, including the largest ones.
\begin{proposition}\label{U-classes}
  Let $0 \leq l \leq n-2$. For $0 \le m \le \lfloor{\frac{n-l-1}{2}}\rfloor +1$, set   
  $$\mathcal{A}_m = \{ A(a_1, a_2, \ldots, a_{n-l-1}) \mid a_i = 0 ~\mbox{for}~ i \le m, a_{i}  \in F_q^{\times} ~\mbox{for}~ i > m\}$$ 
  and for $\lfloor{\frac{n-l-1} {2}}\rfloor < m \le n-l$, set
  $$\mathcal{B}_m = \{A(a_1, a_2, \ldots, a_{n-l-1}) \mid a_i \in F_q^{\times} ~\mbox{for}~ i < m, a_{i} =0  ~\mbox{for}~ i \ge m\}.$$  
  Then, the elements in $\mathcal{A}_m$ are representatives of distinct $U(n,q)$-conjugacy classes of size $q^{\frac{(n-l-1)(n-l-2)}{2} - \frac{m(m-1)}{2}}$ and the elements in $\mathcal{B}_m$ are representatives of distinct $U(n,q)$-conjugacy classes of size $q^{\frac{(n-l-1)(n-l-2)}{2} - \frac{(n-l-m)(n-l-m-1)}{2}}$.
\end{proposition}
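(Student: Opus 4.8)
The plan is to verify, for each family, two things: first, that the centralizer in $U(n,q)$ of a given element $A \in \mathcal{A}_m$ (resp.\ $\mathcal{B}_m$) has exactly the order prescribed by the stated class size, and second, that no two members of the same family are conjugate in $U(n,q)$. The second point is the cheap one: by Lemma~\ref{UL-Canonical} every element $A(a_1,\ldots,a_{n-l-1})$ is canonical in $U(n,q)$, and by the uniqueness of the canonical representative of a conjugacy class (\cite[Theorem 3.2]{va1}), distinct canonical elements lie in distinct classes. Since the elements of $\mathcal{A}_m$ (and of $\mathcal{B}_m$) are pairwise distinct matrices, they represent pairwise distinct conjugacy classes. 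So the entire content is the computation of $|C_{U(n,q)}(A)|$, equivalently of the class size $|U(n,q)|/|C_{U(n,q)}(A)| = q^{\binom n2}/|C_{U(n,q)}(A)|$.

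For the centralizer computation I would count \emph{inert points}. Recall the dichotomy: for each $(r,s)\in\mathcal{I}$, passing from $U(n,q)/\mathcal{G}_{(r,s)^*}$ to $U(n,q)/\mathcal{G}_{(r,s)}$ either leaves the number of classes meeting $\bar A\mathcal N_{(r,s)}$ equal to $1$ (an inert point of $A$) or multiplies it by $q$. Consequently, for a canonical $A$, the size of its conjugacy class in $U(n,q)$ equals $q^{\#\{(r,s)\in\mathcal{I}\,:\,(r,s)\text{ is not inert for }A\}}$, so it suffices to show that $A\in\mathcal{A}_m$ has exactly $\binom{n-l-1}{2}-\binom m2$ non-inert points (and similarly $\binom{n-l-1}{2}-\binom{n-l-m}{2}$ for $A\in\mathcal{B}_m$), or equivalently the complementary count of inert points. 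Here Lemmas~\ref{dual1} and~\ref{dual2} do the work: for $A=A(a_1,\ldots,a_{n-l-1})$ the only nonzero off-diagonal entries are $a_i$ at position $(l+1+i,i)$ on the $(l+1)$-th subdiagonal, with all entries strictly below position $(l+1+i,i)$ in its column, and strictly to the left of it in its row, equal to zero. So for each $i$ with $a_i\neq 0$, Lemma~\ref{dual1} applied to $(r,s)=(l+1+i,i)$ shows that $(l+1+i,s')$ is inert for all $s'<i$, and Lemma~\ref{dual2} shows that $(r',i)$ is inert for all $r'>l+1+i$ (the hypothesis $a_{jr'}=0$ for $j>r'$ holds automatically since the only nonzero subdiagonal is the $(l+1)$-th and $r'>l+1+i$ forces the relevant entry to vanish or to lie outside the range). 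Counting the index pairs that are swept up in this way, as $i$ runs over the support of $a$, and checking that these contributions are disjoint and exhaust everything outside an explicit triangular block of size $\binom m2$ (resp.\ $\binom{n-l-m}2$), yields the claimed class sizes. For $\mathcal{A}_m$ the support of $a$ is $\{m+1,\ldots,n-l-1\}$; for $\mathcal{B}_m$ it is $\{1,\ldots,m-1\}$; the case split in the definition (whether $m\le\lfloor (n-l-1)/2\rfloor$ or not) is exactly what makes one of the two lemmas the dominant source of inert points, so that the union of the inert pairs it produces is already the whole complement of the small triangular block.

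The main obstacle will be the bookkeeping in this last step: showing that the inert pairs furnished by Lemmas~\ref{dual1} and~\ref{dual2}, taken over all $i$ in the support, are pairwise distinct and that their union is precisely $\mathcal{I}$ minus the asserted triangular block, with no double counting and no omission. One has to be careful that index pairs $(r,s)$ with $r-s\le l$ (which are forced to be inert for trivial reasons, since $A\in U_l(n,q)$ means those coordinates are identically zero on the relevant quotients) are accounted for, and that the pairs lying on the $(l+1)$-th subdiagonal itself — precisely the $(l+1+i,i)$ with $a_i\neq 0$ — are exactly the non-inert ones coming from the nonzero entries, contributing the "$-\binom m2$" type correction. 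Once the inert-point count is pinned down, the class sizes drop out immediately, and combining with the distinctness already established from canonicity completes the proof.
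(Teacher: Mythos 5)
There is a genuine gap, and it begins with the dictionary between inert points and class sizes: you have it inverted. For a canonical matrix $A$, the size of $A^{U(n,q)}$ is $q$ raised to the number of \emph{inert} points of $A$, not of the non-inert ones; $q^{\#\{\text{non-inert points}\}}$ is the \emph{centralizer} order. You can check this on $A(1,\ldots,1)$ with $l=0$: the $n-1$ subdiagonal positions are non-inert (the subdiagonal is a conjugation invariant, so each coset $\bar A\mathcal{N}_{(t+1,t)}$ meets $q$ classes), the remaining $\binom{n-1}{2}$ positions are the inert ones supplied by Lemma~\ref{dual1}, and the class size is $q^{\binom{n-1}{2}}$, not $q^{\,n-1}$. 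The same inversion occurs when you declare the positions $(r,s)$ with $r-s\le l$ ``inert for trivial reasons'': since the class of $A\in U_l(n,q)$ stays inside the normal subgroup $U_l(n,q)$, those coordinates are constant on the class, the coset meets $q$ classes, and the positions are \emph{non}-inert. This is why the target is exactly $\binom{n-l-1}{2}-\binom{m}{2}$ \emph{inert} points --- far fewer than ``$\mathcal{I}$ minus a triangular block of size $\binom{m}{2}$'' --- and it is precisely what summing the $t-1$ inert points from Lemma~\ref{dual1} over $t=m+1,\ldots,n-l-1$ yields. Your bookkeeping plan, as written, would not land on the stated exponent.

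Second, and independently of the convention: Lemmas~\ref{dual1} and~\ref{dual2} only ever certify a point \emph{as} inert, so (with the correct dictionary) they can only bound the class size from \emph{below}. Your outline contains no mechanism for the matching upper bound; to get it by inert-point counting you would have to prove that every remaining position is non-inert, and nothing you invoke does that. The paper closes this half of the argument by a different, elementary device: it exhibits an explicit normal subgroup $G_m\le U_{l+1}(n,q)$ of order $q^{\binom{n-l-1}{2}-\binom{m}{2}}$ with $[A,U(n,q)]\subseteq G_m$, whence $A^{U(n,q)}\subseteq AG_m$ and the class has size at most $|G_m|$. Your treatment of distinctness via canonicity matches the paper and is fine. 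One further caution for $\mathcal{B}_m$: Lemma~\ref{dual2} requires the entire column $r'$ to vanish, which fails for columns in the support of $a$, so in general both lemmas are needed there, not just the ``dominant'' one.
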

\begin{proof}
Since, by Lemma~\ref{UL-Canonical}, the elements in $\mathcal{A}_m$  and $\mathcal{B}_m$ are canonical elements of $U(n, q)$, it follows by~\cite[Corollary 3.3]{va1}  that these are pair-wise non-conjugate in $U(n,q)$.

 Let $A \in \mathcal{A}_m$. Then for each $t$, $m+1 \le t \le n-l-1$, it follows from Lemma \ref{dual1} that there are  $t-1$ inert points of $A$ corresponding to $a_t$. So the  number of inert points of $A$ is at least $\frac{(n-l-1)(n-l-2)}{2} - \frac{m(m-1)}{2}$.  Thus, by \cite[Theorem 3.5]{va1}, the conjugacy class of $A$ in $U(n, q)$ has size at least $q^{\frac{(n-l-1)(n-l-2)}{2} - \frac{m(m-1)}{2}}$. We claim that it can not be bigger than this. Let $G_m$ denote the subset of 
 $U_{l+1}(n,q)$ defined as 
 $$G_m = \{B = (b_{ij}) \in U_{l+1}(n,q) \mid b_{ij} = 0 ~ \mbox{for all} ~ l+2 < i \le l+m+1, 1 \le j < m\}.$$
 It is not difficult to see that $G_m$ is a normal subgroup of $U(n, q)$ having order \\$q^{\frac{(n-l-1)(n-l-2)}{2} - \frac{m(m-1)}{2}}$. Notice that $[A, U(n, q)] \subseteq G_m$, where 
 $$[A, U(n, q)] = \{[A, C] \mid C \in U(n, q)\}.$$
 This shows that the size of the conjugacy class of $A$ in $U(n, q)$ is at the most $|G_m|$, as claimed. Hence the assertion for the elements of  $\mathcal{A}_m$ holds.
 
 Assertion for the elements in $\mathcal{B}_m$ holds on the same lines using Lemma \ref{dual2}, which completes  the proof.
\end{proof}

%%%%%%%%%%%%%%%%%%%%%
\section{Unitriangular matrix group}

We look at the power map $w=X^p$ on the unitriangular group $U(n,q)$. We begin by stating  the following  results from~\cite{bi} to improve readability of this section.
\begin{lemma}\label{m-power}
Let $A$ be a lower unitriangular matrix in $U(n, q)$ such that $A - I  = (a_{ij})$. Then the matrix $A^m= I + (b_{ij})$ is given by:
\begin{eqnarray*}\label{power}
b_{ij}&=& {m\choose 1} a_{ij} + {m\choose 2}\left(\sum_{r_1=j+1}^{i-1} a_{i,r_1}a_{r_1,j}\right) + {m\choose 3} \left(\sum_{r_1=j+1}^{i-1} \sum_{r_2=r_1+1}^{i-1} a_{i,r_2}a_{r_2,r_1}a_{r_1,j}\right)\\
& & +\cdots   \cdots + {m\choose k}\left(\sum_{r_1=j+1}^{i-1} \sum_{r_2=r_1+1}^{i-1}\cdots\sum_{r_{k-1}=r_{k-2}+1}^{i-1} a_{i,r_{k-1}}a_{r_{k-1},r_{k-2}}\cdots a_{r_1,j} \right)\\
&& + \cdots 
 \cdots + {m\choose m}\left( \sum_{r_1=j+1}^{i-1} \sum_{r_2=r_1+1}^{i-1}\cdots\sum_{r_{m-1}=r_{m-2}+1}^{i-1} a_{i,r_{m-1}}a_{r_{m-1},r_{m-2}}\cdots a_{r_1,j}\right). 
\end{eqnarray*}
\end{lemma}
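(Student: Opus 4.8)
The plan is to reduce the assertion to the binomial theorem applied to the nilpotent perturbation $N := A - I$ of the identity matrix. Write $A = I + N$; then $N = (a_{ij})$ is strictly lower triangular, hence nilpotent with $N^k = 0$ for all $k \geq n$. Since $I$ commutes with $N$, the binomial expansion is valid and terminates, giving
\[
A^m = (I + N)^m = \sum_{k=0}^{m} \binom{m}{k} N^k ,
\]
where the terms with $k \geq n$ contribute nothing.

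The next step is to write out the entries of the powers $N^k$. By the definition of matrix multiplication, the $(i,j)$ entry of $N^k$ is the sum of all products $N_{i, r_{k-1}} N_{r_{k-1}, r_{k-2}} \cdots N_{r_2, r_1} N_{r_1, j}$ taken over $r_1, \ldots, r_{k-1} \in \{1, \ldots, n\}$. Because $N_{ab} = a_{ab}$ can be nonzero only when $a > b$, such a product can be nonzero only when $j < r_1 < r_2 < \cdots < r_{k-1} < i$; restricting the summation to this range yields
\[
(N^k)_{ij} = \sum_{r_1 = j+1}^{i-1} \sum_{r_2 = r_1+1}^{i-1} \cdots \sum_{r_{k-1} = r_{k-2}+1}^{i-1} a_{i, r_{k-1}} a_{r_{k-1}, r_{k-2}} \cdots a_{r_1, j},
\]
which is precisely the $k$-th bracketed expression in the statement (for $k = 1$ it reduces to $a_{ij}$, and the $k = 0$ term accounts for the identity summand). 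Note that when $k > i - j$ there is no strictly increasing chain of $k-1$ integers lying strictly between $j$ and $i$, so the bracket is an empty sum and vanishes; this is why the stated formula for $b_{ij}$ holds for all $m$ and $n$, the sum being finite in effect. Substituting these expressions into the binomial expansion, the $(i,j)$ entry of $A^m - I$ is $\sum_{k=1}^{m} \binom{m}{k} (N^k)_{ij}$, which is exactly the claimed value of $b_{ij}$.

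I do not anticipate a genuine obstacle here: the argument is essentially bookkeeping, and the only points requiring care are keeping track of the nested summation ranges, using the empty-sum convention, and checking that the support condition ``$a_{ab} \neq 0 \Rightarrow a > b$'' forces the strict inequalities $j < r_1 < \cdots < r_{k-1} < i$ in each product. An alternative route is to induct on $m$, passing from $A^m$ to $A^{m+1} = A \cdot A^m$ and using Pascal's identity $\binom{m}{k-1} + \binom{m}{k} = \binom{m+1}{k}$ to merge the contributions of $N^{k-1}$ and $N^k$; but the direct binomial-theorem computation is more transparent, so I would prefer it.
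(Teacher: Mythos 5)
Your proof is correct. Note that the paper itself does not prove this lemma: it is quoted from the reference [Bi] (Bier) purely ``to improve readability,'' so there is no in-paper argument to compare against. Your route --- writing $A = I + N$ with $N = A - I$ strictly lower triangular, invoking the binomial theorem for the commuting pair $I, N$ to get $A^m = \sum_{k=0}^{m}\binom{m}{k}N^k$, and then identifying $(N^k)_{ij}$ with the sum over strictly increasing chains $j < r_1 < \cdots < r_{k-1} < i$ forced by the support condition on $N$ --- is the standard and essentially canonical proof of this identity, and all the details you flag (the empty-sum convention for $k > i-j$, the nilpotency $N^n = 0$, the reduction of the integer binomial coefficients into the field) are handled correctly. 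The only cosmetic remark is that the coefficients $\binom{m}{k}$ should be understood as integers acting on $\mathbb{F}_q$ (i.e.\ reduced mod $p$), which is exactly how the paper later uses the formula in Corollary 3.2 to kill all terms with $1 \le k \le p-1$ when $m = p$.
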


We use Lemma~\ref{m-power} to prove the following result for $p$th powers.

\begin{corollary}\label{p-power-utri}
Let $A   \in U(n,q)$ be such that  $A - I = (a_{i, j})$ and $A^p = I + (b_{i, j})$. Then,  $b_{i,j}=0$ for all $i-j<p$ and 
$$  b_{i,j}=  \sum_{r_1=j+1}^{i-1} \sum_{r_2=r_1+1}^{i-1}\cdots\sum_{r_{p-1}=r_{p-2}+1}^{i-1} a_{i,r_{p-1}}a_{r_{p-1},r_{p-2}}\cdots a_{r_1,j}$$
otherwise. In particular, if $n\leq p$, then $A^p=I$, and if $n>p$, then  $U(n, q)^p \subseteq U_{p-1}(n,q)$.
\end{corollary}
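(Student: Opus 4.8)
The plan is to derive Corollary~\ref{p-power-utri} as a direct specialization of the power formula in Lemma~\ref{m-power} with $m = p$. The key observation is that over $\mathbb{F}_q$, where $q$ is a $p$-power, the binomial coefficients $\binom{p}{k}$ for $1 \le k \le p-1$ all vanish modulo $p$, hence are zero in $\mathbb{F}_q$, while $\binom{p}{p} = 1$. Therefore, in the expression for $b_{ij}$ given by Lemma~\ref{m-power}, every summand except the last one (the $\binom{p}{p}$-term) dies, leaving exactly the iterated sum
\[
b_{i,j} = \sum_{r_1 = j+1}^{i-1} \sum_{r_2 = r_1+1}^{i-1} \cdots \sum_{r_{p-1} = r_{p-2}+1}^{i-1} a_{i,r_{p-1}} a_{r_{p-1},r_{p-2}} \cdots a_{r_1,j}.
\]

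First I would invoke Lemma~\ref{m-power} with $m = p$ and reduce modulo $p$: since $\operatorname{char}(\mathbb{F}_q) = p$, I state the standard fact that $p \mid \binom{p}{k}$ for $0 < k < p$, so all intermediate terms vanish and only the top term survives, yielding the displayed formula. Next I would check the vanishing claim for $i - j < p$: the $(p-1)$-fold nested sum requires a strictly increasing chain of indices $j < r_1 < r_2 < \cdots < r_{p-1} < i$, which forces $i - j \ge p$; when $i - j < p$ there is no such chain, the sum is empty, and $b_{ij} = 0$. In particular, if $n \le p$ then $i - j \le n - 1 < p$ for all relevant $(i,j)$, so $A^p = I$; and if $n > p$, the vanishing of $b_{ij}$ for $i - j \le p-1$ says precisely that $A^p - I$ has its first $p-1$ subdiagonals zero, i.e.\ $A^p \in U_{p-1}(n,q)$, whence $U(n,q)^p \subseteq U_{p-1}(n,q)$.

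I do not expect any genuine obstacle here; the statement is essentially a bookkeeping corollary. The only point requiring a modicum of care is making sure the index ranges in the nested sums are interpreted correctly (each $r_t$ ranging strictly between the previous index and $i$), so that the combinatorial argument for emptiness when $i - j < p$ is airtight. Everything else follows immediately from Lemma~\ref{m-power} and the characteristic-$p$ reduction of binomial coefficients.
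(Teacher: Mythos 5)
Your proposal is correct and follows essentially the same route as the paper: specialize Lemma~\ref{m-power} to $m=p$, kill the intermediate terms via $p\mid\binom{p}{k}$ for $0<k<p$, and observe that the surviving nested sum is empty unless $i-j\ge p$. Your handling of the case $n\le p$ (noting $i-j\le n-1<p$) is in fact slightly more careful than the paper's own wording, which says ``if $n<p$'' where $n\le p$ is meant.
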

\begin{proof}
Since the binomial coefficients appearing in the formula of Lemma~\ref{m-power} for $m = p$ are all zero modulo $p$, except  possibly the last one,  we get 
$$b_{i,j}=  \sum_{r_1=j+1}^{i-1} \sum_{r_2=r_1+1}^{i-1}\cdots\sum_{r_{p-1}= r_{p-2}+1}^{i-1} a_{i,r_{p-1}}a_{r_{p-1},r_{p-2}}\cdots a_{r_1,j} .$$ 
If $i-j < p$, this is an empty sum, that is, it's $0$. This  happens for all pairs  $(i,j)$ if $n < p$; giving $A^p = I$. If $i-j \geq p$, which actually  implies that $n \ge p$, then $a_{i,j}$'s are given by the expression as stated, and obviously fall in $U_{p-1}(n,q)$. 
\end{proof}

As an immediate consequence, we have the following result.

\begin{proposition}\label{pthpower}
For $n > p$ and $l = p-1$, every element of $\mathcal{A}_m$ and  $\mathcal{B}_m$ (defined in Proposition \ref{U-classes}) is a pth power in $U(n, q)$.
\end{proposition}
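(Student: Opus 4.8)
The plan is to produce, for each target matrix $M = A(a_1,\dots,a_{n-p})$ lying in $\mathcal{A}_m$ or $\mathcal{B}_m$ (recall $l=p-1$, so these matrices are supported on the $p$th subdiagonal), an explicit $p$th root of the form $C = I + N$ with $N = \sum_{i=1}^{n-1} c_i\, e_{i+1,i}$ a single weighted subdiagonal, where $c_1,\dots,c_{n-1}\in\mathbb{F}_q$ are to be chosen. Since $q$ is a power of $p$, all the intermediate binomial coefficients $\binom{p}{j}$, $0<j<p$, vanish, so $C^p = (I+N)^p = I + N^p$; this is exactly the content of Corollary~\ref{p-power-utri} applied to $C$. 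Because $N$ is a weighted shift (it sends the $k$th coordinate vector to $c_k$ times the $(k+1)$th), a direct computation gives
$$N^p = \sum_{k=1}^{n-p} (c_k c_{k+1}\cdots c_{k+p-1})\, e_{k+p,k},$$
so that $C^p - I$ is supported precisely on the $p$th subdiagonal, exactly where $M-I$ lives.

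Comparing $p$th subdiagonals, the equation $C^p = M$ becomes the system of $n-p$ equations
$$c_i c_{i+1}\cdots c_{i+p-1} = a_i, \qquad i = 1,\dots,n-p,$$
in the $n-1$ unknowns $c_1,\dots,c_{n-1}$. The structural point I would exploit is that this system is triangular: equation $i$ is the first to involve the variable $c_{i+p-1}$, so once $c_1,\dots,c_p$ are fixed each later equation determines a single new variable. Consequently, whenever a block of consecutive right-hand sides $a_s,\dots,a_t$ is entirely nonzero, one may set $c_s=a_s$, $c_{s+1}=\dots=c_{s+p-1}=1$ and then solve recursively via $c_{i+p-1} = a_i/(c_i\cdots c_{i+p-2})$, obtaining nonzero values $c_s,\dots,c_{t+p-1}$; this works over every $\mathbb{F}_q$.

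It then remains only to match the zero pattern. For $M\in\mathcal{A}_m$ the entries $a_1,\dots,a_m$ vanish while $a_{m+1},\dots,a_{n-p}$ are nonzero, so I would set $c_1=\cdots=c_m=0$: this forces every product indexed by $i\le m$ (each containing the factor $c_i$) to vanish, while the products indexed by $i>m$ involve only $c_{m+1},\dots,c_{n-1}$ and are solved by the recursive procedure above. The case $M\in\mathcal{B}_m$ is symmetric: zeroing $c_{m+p-1}=\cdots=c_{n-1}=0$ kills the products indexed by $i\ge m$ and leaves the leading block $a_1,\dots,a_{m-1}$ to be solved among $c_1,\dots,c_{m+p-2}$.

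I do not expect a genuine obstacle here; the substantive content is the reduction $C^p = I + N^p$ together with the triangular solvability of the product system. The only care needed is bookkeeping: verifying that the prescribed zero positions always lie in the valid range $1\le j\le n-1$ for the admissible values of $m$ — in particular the boundary cases $m=0$ and $m=n-p+1$, where one block is empty and the system is fully nonzero — and checking that a zeroed variable never sits inside a product required to be nonzero, which holds because in each case the zeros are confined to one end of the index range and the nonzero products are supported on the complementary block.
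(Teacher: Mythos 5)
Your proposal is correct and follows essentially the same route as the paper: both construct a candidate $p$th root supported on the first subdiagonal, use the $p$th-power formula (Corollary~\ref{p-power-utri}) to reduce the problem to the triangular system $c_i c_{i+1}\cdots c_{i+p-1}=a_i$, solve it by the same forward recursion, and handle $\mathcal{A}_m$ and $\mathcal{B}_m$ by placing the zero entries at one end of the subdiagonal. The only (cosmetic) differences are your cleaner observation that $(I+N)^p=I+N^p$ for the weighted shift $N$ and a slightly different initialization of the recursion.
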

\begin{proof}
We first show that the elements  $A:=A(a_1, a_2, \ldots, a_{n-l-1})$ defined in \eqref{canonical-elements} for $a_1, a_2, \ldots,$ $ a_{n-l-1} \in \mathbb F_q^{\times}$
are $p$th powers. Let $b_{2,1} = \cdots = b_{p, p-1} = 1$. Then iteratively define 
$$b_{p+i+1, p+i} := (b_{i+2, i+1}  \cdots  b_{p+i, p+i-1})^{-1}a_{i+1},$$
for $0 \le i < n-p$.  Now consider the lower unitriangular matrix  $C := (c_{i,j})$, where  $c_{i, i-1} = b_{i, i-1}$ for $2 \le i \le n$ and $c_{i, j} = 0$ for $i - j > 1$. Using Corollary \ref{p-power-utri}, it is a routine computation to show that $C^p = A$. 

Now let $A :=A(a_1, a_2, \ldots, a_{n-l-1}) \in \mathcal{B}_m$. Then, by the definition, $a_i \in F_q^{\times} ~\mbox{for}~ i < m, a_{i} =0  ~\mbox{for}~ i \ge m$. Thus, in the above procedure, $b_{i, i-1} = 0$ for $p + m \le i \le n$. Considering  $C := (c_{i,j})$, where  $c_{i, i-1} = b_{i, i-1}$ for $2 \le i \le n$ and $c_{i, j} = 0$ for $i - j > 1$, we see, again using  Corollary \ref{p-power-utri}, that $C^p = A$. 

For $A  :=A(a_1, a_2, \ldots, a_{n-l-1}) \in \mathcal{A}_m$, let $b_{i, i-1} = 0$ for $2 \le i \le m+1$, $b_{i, i-1} = 1$ for $m+2 \le i \le m+p$ and then iteratively define
$$b_{p+i+1, p+i} := (b_{i+2, i+1}  \cdots  b_{p+i, p+i-1})^{-1}a_{i+1},$$
for $m \le i < n-p$. Again considering $C := (c_{i,j})$, where  $c_{i, i-1} = b_{i, i-1}$ for $2 \le i \le n$ and $c_{i, j} = 0$ for $i - j > 1$, it follows that  $C^p = A$, which completes the proof.
\end{proof}

The following proposition, which follows from the above formulas, is proved in~\cite[Theorem~2, Theorem~3]{bi} (also see \cite[III, Satz 16.5]{Hu}). 
\begin{proposition}\label{BU}
Let $q$ be a $p$-power.  Then,
\begin{enumerate}
\item for $n\leq p$,  $U(n,q)^p = 1$;
\item for $n=p+1$ and $n = p+2$,  $U(n,q)^p = U_{p-1}(n,q)$; 
\item for $n \geq  p+3$, $U(n,q)^p \subset U_{p-1}(n,q)$ and   $\langle U(n,q)^p \rangle = U_{p-1}(n,q)$.
\end{enumerate}
\end{proposition}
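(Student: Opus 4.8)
The plan is to reduce all three parts to the containment $U(n,q)^p\subseteq U_{p-1}(n,q)$, which is precisely Corollary~\ref{p-power-utri}, and then supply the reverse assertions. Part (1) is then immediate, since Corollary~\ref{p-power-utri} already records that $A^p=I$ whenever $n\le p$, so that $U(n,q)^p=1$. The generation statement $\langle U(n,q)^p\rangle=U_{p-1}(n,q)$ appearing in (2) and (3) I would dispatch uniformly: the subgroup generated by the image of the word $X^p$ is verbal, hence fully invariant, and by \cite[Theorem~1]{bi} the only fully invariant subgroups of $U(n,q)$ are the terms $U_l(n,q)$ of the lower central series. Thus $\langle U(n,q)^p\rangle=U_l(n,q)$ for a unique $l$; the inclusion $U(n,q)^p\subseteq U_{p-1}(n,q)$ forces $l\ge p-1$, while the element $A(1,1,\ldots,1)=I+\sum_i e_{p+i,i}$, which is a $p$th power by Proposition~\ref{pthpower} and visibly lies outside $U_p(n,q)$, forces $l\le p-1$. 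Hence $l=p-1$, settling generation for every $n>p$ at once.

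The remaining content of (2) is the set-theoretic equality $U(n,q)^p=U_{p-1}(n,q)$ for $n=p+1$ and $n=p+2$, i.e.\ that every element is itself a $p$th power. For $n=p+1$ the group $U_{p-1}(p+1,q)$ consists only of the matrices $I+a\,e_{p+1,1}$, and these are the elements $A(a)$ already shown to be $p$th powers in Proposition~\ref{pthpower} (together with the identity). For $n=p+2$ the group $U_{p-1}(p+2,q)$ is the elementary abelian group of matrices $I+a_1e_{p+1,1}+a_2e_{p+2,2}+c\,e_{p+2,1}$; Proposition~\ref{pthpower} already covers the locus $c=0$, so the task is to realize the deeper entry $c$. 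I would write a candidate root $C$ carrying first-subdiagonal entries $x_2,\ldots,x_{p+2}$ together with a single second-subdiagonal entry, and read off from Corollary~\ref{p-power-utri} that the top subdiagonal of $C^p$ equals $(x_2\cdots x_{p+1},\,x_3\cdots x_{p+2})$ while the $(p+2,1)$-entry equals that extra parameter times a window of the $x_i$ (a second-subdiagonal entry is exactly what supplies the single ``jump of $2$'' needed to reach the subdiagonal $i-j=p+1$, and one checks it leaves the two top entries untouched). Solving for a prescribed $(a_1,a_2,c)$ is then a short case analysis on which of $a_1,a_2$ vanish: placing the extra entry at $(p+2,p)$ handles $a_1\ne 0$ as well as the case $a_1=a_2=0$, and the mirror choice at $(3,1)$ handles $a_2\ne 0$.

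For $n\ge p+3$ only the proper inclusion $U(n,q)^p\subsetneq U_{p-1}(n,q)$ is left, and I would prove it by exhibiting an unreachable target on the top subdiagonal. By Corollary~\ref{p-power-utri} the $(p+k,k)$-entry of $A^p$ is the window product $y_k=x_kx_{k+1}\cdots x_{p+k-1}$ in the first-subdiagonal entries $x_i=a_{i+1,i}$ of $A$. Since $y_1\ne 0$ forces $x_1,\ldots,x_p\ne 0$ and $y_3\ne 0$ forces $x_3,\ldots,x_{p+2}\ne 0$, together they force $x_2,\ldots,x_{p+1}\ne 0$ and hence $y_2\ne 0$; thus the pattern $(y_1,y_2,y_3)=(1,0,1)$ cannot occur. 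Consequently $I+e_{p+1,1}+e_{p+3,3}$, which requires exactly this pattern and which exists precisely because $n-p\ge 3$, lies in $U_{p-1}(n,q)\setminus U(n,q)^p$, giving strict containment; note this obstruction evaporates when $n=p+2$, consistently with part (2).

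The one genuinely computational step, and where I expect the most bookkeeping, is the root construction for $n=p+2$: from the nested sum of Corollary~\ref{p-power-utri} one must confirm both that the chosen second-subdiagonal entry contributes to the $(p+2,1)$-place through exactly one index chain and that it does not perturb the two top-subdiagonal entries, and then verify solvability of the resulting system in each vanishing pattern of $(a_1,a_2)$. Everything else is structural: the containment is Corollary~\ref{p-power-utri}, the generation is the full-invariance dichotomy of \cite{bi}, and properness for $n\ge p+3$ is the elementary window-product obstruction above.
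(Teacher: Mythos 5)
Your proof is correct, but it cannot be compared line-by-line with the paper's, because the paper offers no proof at all: it simply cites Bier \cite[Theorems~2, 3]{bi} (and Huppert) for this proposition. What you have written is a genuine, essentially self-contained argument, and each step checks out. The reduction of the generation claim to the classification of fully invariant subgroups of $U(n,q)$ is clean and is legitimate here since the paper itself records that classification (citing \cite[Theorem~1]{bi}); the sandwich $U(n,q)^p\subseteq U_{p-1}(n,q)$ together with $A(1,\dots,1)\notin U_p(n,q)$ does pin down $l=p-1$. For $n=p+2$ your root construction is sound: since every index chain of length $p$ realizing a displacement of $p$ (resp.\ $p+1$) consists of size-one steps only (resp.\ exactly one size-two step), the two top-subdiagonal entries of $C^p$ are the window products $x_2\cdots x_{p+1}$ and $x_3\cdots x_{p+2}$, unaffected by the single second-subdiagonal entry, and the $(p+2,1)$-entry is that entry times the complementary window; your three-way case analysis on the vanishing of $(a_1,a_2)$ then solves the system in every case. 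The properness argument for $n\geq p+3$ via the forbidden pattern $(y_1,y_2,y_3)=(1,0,1)$ is also valid, because the $p$-th subdiagonal of $A^p$ depends only on the first subdiagonal of $A$, so $I+e_{p+1,1}+e_{p+3,3}$ is indeed outside the image. The only mild caveat is that your argument is not fully independent of \cite{bi}, since it borrows the fully-invariant-subgroup classification from the very source being replaced; if one wanted complete independence, one could instead verify generation directly by producing enough $p$-th powers (e.g.\ the elements of Proposition~\ref{pthpower}) whose products exhaust $U_{p-1}(n,q)$, but as written your route is shorter and perfectly adequate in the context of this paper.
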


We now provide a lower bound on $|U(n,q)^p|$.
\begin{proposition}\label{lbound}
Let $q$ be a  power of $p$ and $n$ an integer  such that $n \ge p+3$. Then,  if $n-p$ is even, 
\begin{eqnarray*}
|U(n,q)^p| \ge  q^{\frac{(n-p)(n-p-1)}{2}} \big( (q-1)^{n-p} \big)  + \displaystyle\sum_{m=1}^{ \lfloor{\frac{n-p}{2}}\rfloor} q^{\frac{(n-p)(n-p-1)}{2} - \frac{m(m-1)}{2}} \big( 2 (q-1)^{n-p-m}\big)
\end{eqnarray*}
and if $n-p$ is odd,
\begin{eqnarray*}
|U(n,q)^p| &\ge&  q^{\frac{(n-p)(n-p-1)}{2}} \big( (q-1)^{n-p} \big)  + \displaystyle\sum_{m=1}^{ \lfloor{\frac{n-p}{2}}\rfloor} q^{\frac{(n-p)(n-p-1)}{2} - \frac{m(m-1)}{2}} \big( 2 (q-1)^{n-p-m}\big)\\
& & + q^{\frac{(n-p)(n-p-1)}{2} - \frac{r(r-1)}{2}} \big(2 (q-1)^{n-p-r}\big),
\end{eqnarray*}
where $r =  \lfloor{\frac{n-p}{2}}\rfloor +1$.
\end{proposition}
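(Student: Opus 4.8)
The plan is to exploit that $U(n,q)^p$ is a normal subset of $U(n,q)$: if $x = g^p$ and $h \in U(n,q)$, then $h^{-1}xh = (h^{-1}gh)^p$ is again a $p$th power, so $U(n,q)^p$ is closed under conjugation and is therefore a disjoint union of conjugacy classes. Consequently $|U(n,q)^p|$ is bounded below by the sum of the sizes of any family of pairwise distinct conjugacy classes known to meet $U(n,q)^p$. For the family I would take the classes with representatives in the sets $\mathcal{A}_m$ and $\mathcal{B}_m$ of Proposition~\ref{U-classes}, specialized to $l = p-1$: by Proposition~\ref{pthpower} every such representative is a $p$th power, so each of these classes lies entirely in $U(n,q)^p$, and by Proposition~\ref{U-classes} (together with~\cite[Corollary 3.3]{va1}) they are pairwise non-conjugate, hence may be added without overcounting.

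Next I would record, with $l = p-1$ so that $n-l-1 = n-p$, the exact count and size of each batch of classes. The set $\mathcal{A}_m$ has $a_1 = \cdots = a_m = 0$ and the remaining $n-p-m$ entries in $\mathbb F_q^{\times}$, giving $(q-1)^{n-p-m}$ distinct classes each of size $q^{\frac{(n-p)(n-p-1)}{2} - \frac{m(m-1)}{2}}$; the set $\mathcal{B}_{m'}$ has its first $m'-1$ entries in $\mathbb F_q^{\times}$ and the rest zero, giving $(q-1)^{m'-1}$ distinct classes each of size $q^{\frac{(n-p)(n-p-1)}{2} - \frac{(n-p-m')(n-p-m'-1)}{2}}$. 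The value $m=0$ gives $\mathcal{A}_0$, whose contribution $(q-1)^{n-p}\,q^{\frac{(n-p)(n-p-1)}{2}}$ is exactly the leading summand of the asserted bound.

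The key idea is then the pairing $\mathcal{A}_m \leftrightarrow \mathcal{B}_{n-p+1-m}$. For this choice $\mathcal{B}_{n-p+1-m}$ has $m'-1 = n-p-m$ free nonzero entries, so it contributes the same number $(q-1)^{n-p-m}$ of classes as $\mathcal{A}_m$, while its class size $q^{\frac{(n-p)(n-p-1)}{2} - \frac{(m-1)(m-2)}{2}}$ is at least that of $\mathcal{A}_m$ because $(m-1)(m-2) \le m(m-1)$. Lower-bounding the $\mathcal{B}$-size by the $\mathcal{A}$-size produces exactly the clean factor $2(q-1)^{n-p-m}\,q^{\frac{(n-p)(n-p-1)}{2} - \frac{m(m-1)}{2}}$ appearing inside the sum. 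The remaining work is index bookkeeping: one checks that as $m$ runs from $1$ to $\lfloor\frac{n-p}{2}\rfloor$ both $\mathcal{A}_m$ and $\mathcal{B}_{n-p+1-m}$ stay within the admissible ranges of Proposition~\ref{U-classes} and stay distinct from $\mathcal{A}_0$ and from each other (the $\mathcal{A}$'s carry leading zeros, the paired $\mathcal{B}$'s do not, since $n-p \ge 3$ forces $m' \ge 2$).

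Finally I would split on the parity of $n-p$. When $n-p$ is even the pairing above accounts for every term of the sum, and adding the $\mathcal{A}_0$ contribution gives the stated inequality directly. When $n-p$ is odd the self-paired value $m = m' = r := \lfloor\frac{n-p}{2}\rfloor+1$ remains: here $\mathcal{A}_r$ and $\mathcal{B}_r$ are still distinct classes (again because $r \ge 2$) with the common multiplicity $(q-1)^{n-p-r}$, and adding their sizes contributes the extra summand $2(q-1)^{n-p-r}\,q^{\frac{(n-p)(n-p-1)}{2} - \frac{r(r-1)}{2}}$. I expect the only real obstacle to be this bookkeeping---confirming the floor-function ranges and the pairwise distinctness so that the class sizes may simply be added---while the analytic content, namely the inequality $(m-1)(m-2) \le m(m-1)$ that yields the factor $2$, is immediate.
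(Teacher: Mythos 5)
Your proposal is correct and takes essentially the same route as the paper: Proposition~\ref{pthpower} places every class of $\mathcal{A}_m$ and $\mathcal{B}_{m'}$ (with $l=p-1$) inside the conjugation-invariant set $U(n,q)^p$, and the stated bound is obtained by summing the class sizes from Proposition~\ref{U-classes}, the factor $2$ coming precisely from your pairing $\mathcal{A}_m \leftrightarrow \mathcal{B}_{n-p+1-m}$ together with the extra self-paired term when $n-p$ is odd. The only (harmless) slip is that the class size of $\mathcal{B}_{n-p+1-m}$ is exactly $q^{\frac{(n-p)(n-p-1)}{2}-\frac{m(m-1)}{2}}$, equal to that of $\mathcal{A}_m$ (since $n-l-m' = m$ here, not $m-1$), so no lower-bounding of the $\mathcal{B}$-classes is actually needed.
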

\begin{proof}
It follows from Proposition \ref{pthpower} that every element of $\mathcal{A}_m$ as well as  of $\mathcal{B}_m$ is a $p$th power in $U(n, q)$. The result now follows by considering the sizes of all distinct conjugacy classes of elements of  $\mathcal{A}_m$ and  $\mathcal{B}_m$ obtained in Proposition \ref{U-classes}.
\end{proof}

 We are now ready  to prove Theorem~A.  

\begin{proof}[Proof of Theorem~A]
  The first assertion follows from Proposition \ref{BU}. For the second assertion, by Proposition \ref{lbound}, we have
$$|U(n,q)^p| > q^{\frac{(n-p)(n-p-1)}{2}} \left( (q-1)^{n-p} + 2 (q-1)^{n-p-1} \right).$$
Hence, 
$$\frac{|U(n,q)^p|}{|U_{p-1}(n,q)|} > \frac{q^{\frac{(n-p)(n-p-1)}{2}} \left( (q-1)^{n-p} + 2 (q-1)^{n-p-1} \right)}{q^{\frac{(n-p)(n-p+1)}{2}}},$$
which implies 
$$\frac{|U(n,q)^p|}{|U_{p-1}(n,q)|} > \left(1-\frac{1}{q}\right)^{n-p-1} \left(1+\frac{1}{q}\right).$$ 
Thus, if we take $q \geq n-p-1$, then  we get 
$$\frac{|U(n,q)^p|}{|U_{p-1}(n,q)|} > \left(1-\frac{1}{q}\right)^{n-p-1}\left(1+\frac{1}{q}\right) \ge \left(1-\frac{1}{q}\right)^{q}\left(1+\frac{1}{q}\right) > \frac{1}{3}.$$
This completes the proof.
\end{proof}
\noindent We conclude this section with some computations using MAGMA \cite{magma}, which  are as follows.
\vskip2mm 
\begin{center}
\begin{tabular}{|c|c|c|c|}\hline 
\ $(n,q)$ \ & \ $|U(n, q)^p|$\  & $\ |U_{p-1}(n,q)|$\  &\ $|U(n,q)^p|/|U_{p-1}(n,q)|\ $\\ \hline
(5,2) & 52 & $2^6=64$ & $> \frac{1}{3}$ \\\hline
(5,4) & 3376 & $4^6=4096$ & $> \frac{1}{3}$ \\\hline
(6,2) & 600 & $2^{10}=1024$ & $> \frac{1}{3}$ \\\hline
(6,3) & 585 & $3^6=729$ & $> \frac{1}{3}$ \\\hline
(7,2) & 13344 & $2^{15}=32768$ & $> \frac{1}{3}$\\\hline
(8,2) & 573184 & $2^{21} = 2097152$ & $< \frac{1}{3} $\\\hline 
\end{tabular}
\end{center}
\vskip2mm
\noindent In view of the values in the last row of this table, we remark that the condition  on $q$ in Theorem A can not be completely dropped.    

%%%%%%%%%%%%%%%%%%%%%%%%%%%%%%%%%%%%%%%%%%%%%%%
\section{Triangular matrix group}
In this section we consider  the group of triangular matrices  $T(n,q)$, where $q$ is a power of a prime $p$, 
aiming at computing the size $|T(n, q)^p|$ of the set of its $p$-powers.
Since the group $T(n,2) = U(n,2)$ we assume $q> 2$, now onwards.
We begin with setting up some notation.
We denote by $D(n, q)$ the subgroup of $T(n, q)$ consisting of the diagonal matrices. 
The elements of $D(n,q)$ can be grouped in ``types'' in such a way that all elements of each type have the isomorphic centralizers in $T(n,q)$.
%(We note that this is different from the concept of $z$-classes,  which represents the conjugacy classes of centralizers, which is considered in~\cite{su}.
We recall that a  \emph{partition}  of a positive integer $n$ is a sequence of positive integers $\delta = (a_1, \ldots, a_k)$ such that $a_1\geq a_2 \geq \ldots \geq a_k >0$ and
$\displaystyle{\sum_{j=1}^k} a_j=n$.
One can also write the partition $\delta$ in \emph{power notation} $1^{m_1}2^{m_2}\ldots n^{m_n}$ where $m_i = |\{a_j \mid a_j = i\}|$ is the number of
\emph{parts} $a_j$'s equal to $i$, for $1\leq i \leq n$; so, $m_i\geq 0$ and $\displaystyle{\sum_{i=1}^n} im_i =n$.

Let  $\Pi = \{X_1, X_2, \ldots , X_k \}$ be a \emph{set-partition} of  $I_n = \{1,2 , \ldots, n\}$, i.e.,  a family of non-empty and pairwise disjoint subsets of  $I_n$, whose union is $I_n$. Setting $a_i = |X_i|$ and assuming, as we may, that $a_i \geq a_j$ for $i \leq j$,  the  tuple
  $\delta = (a_1, a_2, \ldots, a_k)$ is a partition of the number $n$; we say that $\delta$ is the \emph{type}  of $\Pi$. 

A diagonal matrix $d \in D(n, q)$,  seen  as a map from the set $I_n = \{1,2 , \ldots, n\}$ to the set $F_q^{\times}$ of the non-zero elements of the field $F_q$,  
determines in a natural way a set-partition $\Pi_d$ of $I_n$, namely the family  of the non-empty fibers of the map $d$.
We set $\delta = \tau(\Pi_d)$ as the \emph{type} of $d$ and we write $\delta = \tau(d)$. 

We denote the number $k$ of parts in $\delta$ by  $l(\delta)$, the \emph{length} of $\delta$,  and we observe  
that there exist elements of type $\delta$ in $D$ if and only if  $l(\delta)  < q$.
(Thus not all partitions of $n$  may appear as type of an element in $D(n,q)$, when $q \leq n$).

Given a partition $\delta$ of $n$ with $l(\delta) <q$, we denote by  $D_{\delta}(n,q) = \{ d \in D \mid \tau(d) = \delta\}$  the set of all elements in $D(n,q)$ of the  given type $\delta$.
\begin{lemma}\label{type-count}
  Let $n$ be a positive integer and let  $\delta = (a_1, a_2, \ldots, a_k)$ be a partition of $n$. Write $\delta$ in power notation as $1^{m_1}\ldots n^{m_n}$ and assume
  that $k = l(\delta) < q$.  Then 
 $$ |D_{\delta}(n,q)| = \frac{(q-1)! n!}{(q-k-1)! \prod_{i=1}^n \left( (i!)^{m_i}\cdot m_i! \right)}   \; .$$ 
\end{lemma}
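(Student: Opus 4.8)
The plan is to count the elements of $D_\delta(n,q)$ by a direct combinatorial argument, splitting the construction of such a diagonal matrix into two independent choices: first choosing the underlying set-partition of $I_n$ of type $\delta$, and then assigning distinct nonzero field values to its blocks.

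First I would count the set-partitions $\Pi$ of $I_n$ whose type is $\delta = (a_1,\dots,a_k)$. The number of ordered partitions of $I_n$ into labelled blocks of sizes $a_1,\dots,a_k$ is the multinomial coefficient $\binom{n}{a_1,\dots,a_k} = \frac{n!}{\prod_{j=1}^k a_j!} = \frac{n!}{\prod_{i=1}^n (i!)^{m_i}}$, using the power-notation rewriting $\prod_j a_j! = \prod_i (i!)^{m_i}$. Since a set-partition is unordered, and blocks of equal size are interchangeable, we must divide by $\prod_{i=1}^n m_i!$ to account for the reorderings of the $m_i$ blocks of size $i$; blocks of distinct sizes are automatically distinguished. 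Hence the number of set-partitions of type $\delta$ is $\dfrac{n!}{\prod_{i=1}^n \big( (i!)^{m_i}\, m_i!\big)}$.

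Next I would count, for a fixed set-partition $\Pi = \{X_1,\dots,X_k\}$ of type $\delta$, the diagonal matrices $d$ with $\Pi_d = \Pi$. Such a $d$ is exactly an assignment to each block $X_t$ of a value in $F_q^\times$, subject to the condition that distinct blocks receive distinct values (so that the fibers of $d$ are precisely the $X_t$ and not a coarser partition). Since $|F_q^\times| = q-1$ and we need an injection from a $k$-element set into it, there are $(q-1)(q-2)\cdots(q-k) = \frac{(q-1)!}{(q-k-1)!}$ such assignments; here the hypothesis $k = l(\delta) < q$ is exactly what guarantees this number is positive (i.e.\ that elements of type $\delta$ exist at all, as already noted before the lemma). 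Multiplying the two counts gives
\[
|D_\delta(n,q)| = \frac{(q-1)!}{(q-k-1)!}\cdot \frac{n!}{\prod_{i=1}^n \big( (i!)^{m_i}\, m_i!\big)},
\]
which is the claimed formula.

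There is no serious obstacle here; the only point requiring care is the bookkeeping in the set-partition count — specifically, being precise about why one divides by $\prod_i m_i!$ rather than by $k!$ (one only quotients by permutations among blocks of \emph{equal} size, since the type is recorded as a partition and blocks of different sizes are intrinsically distinguishable). I would state this cleanly, perhaps by first counting ordered tuples $(X_1,\dots,X_k)$ with $|X_t|$ following the weakly decreasing pattern of $\delta$ and then observing that each unordered set-partition of type $\delta$ arises from exactly $\prod_i m_i!$ such tuples. The injectivity condition on the block-values, and the role of the hypothesis $l(\delta)<q$, should also be spelled out explicitly so that the falling factorial $(q-1)\cdots(q-k)$ is correctly identified with $(q-1)!/(q-k-1)!$.
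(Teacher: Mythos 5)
Your proof is correct and follows essentially the same strategy as the paper: both factor $|D_\delta(n,q)|$ as (number of set-partitions of $I_n$ of type $\delta$) times (number of injective assignments of values from $F_q^\times$ to the $k$ blocks), the latter giving the fiber size $(q-1)!/(q-k-1)!$. The only cosmetic difference is that the paper obtains the set-partition count via the orbit--stabilizer theorem for the $S_n$-action on set-partitions of type $\delta$, whereas you derive the same quantity $n!/\prod_i\bigl((i!)^{m_i} m_i!\bigr)$ from a multinomial coefficient divided by the symmetry factor $\prod_i m_i!$.
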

\begin{proof}
  Let $\Delta$ be the set consisting of the set-partitions of $I_n$ having type $\delta$. As above, we associate to  a diagonal element $d \in D_{\delta}$ a  set-partition
  $\Pi_d \in \Delta$, and observe that all the fibers of the map $\pi:D_{\delta} \rightarrow \Delta$ defined by $\pi(d) = \Pi_d$,  have the same size $\frac{(q-1)!}{(q-k-1)!}$ (the
  number of injective maps from a set of $k = l(\delta)$ elements into a set of $q-1$-elements).
  On the other hand, the cardinality $|\Delta|$ is easily determined by looking at the natural transitive action of the symmetric group $S_n$ on  $\Delta$ and observing that
  the stabilizer in $S_n$ of a partition of type $\delta = 1^{m_1}\ldots n^{m_n}$ has size $\prod_{i=1}^n \left( (i!)^{m_i}\cdot m_i! \right)$.
\end{proof}

\begin{lemma}\label{cent-sizes}
  For any partition $\delta$ of $n$, with $l(\delta) < q$, and for any element $d \in D_{\delta}(n,q)$, all  centralizers $C_{U(n,q)}(d)$ belong to the same isomorphism class.
Moreover, if  $\delta = (a_1, a_2, \ldots, a_k)$, then
 $\mid C_{U(n,q)}(d) \mid = q^{\sum_{i=1}^k {\binom{a_i}{2}}}.$  
\end{lemma}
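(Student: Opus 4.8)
The plan is to compute the centralizer explicitly from the commuting condition and then recognize it as a direct product of smaller unitriangular groups. First I would write $d = \diag(d_1, \ldots, d_n)$ and, for $u = (u_{ij}) \in U(n,q)$, compare the $(i,j)$-entries of $du$ and $ud$: since $d$ is diagonal one has $(du)_{ij} = d_i u_{ij}$ and $(ud)_{ij} = u_{ij} d_j$, so $du = ud$ holds precisely when $(d_i - d_j) u_{ij} = 0$ for all $i, j$. As the diagonal entries $u_{ii} = 1$ and the strictly upper entries vanish automatically, this gives the clean characterization: $u \in C_{U(n,q)}(d)$ if and only if $u_{ij} = 0$ for every pair $i > j$ with $d_i \neq d_j$.

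Next I would translate this support condition into the language of the set-partition $\Pi_d$. Writing $\Pi_d = \{X_1, \ldots, X_k\}$ for the blocks (the non-empty fibers of $d$), with $|X_s| = a_s$, the equality $d_i = d_j$ means exactly that $i$ and $j$ lie in a common block. Hence the matrices in $C_{U(n,q)}(d)$ are precisely those lower unitriangular matrices whose only possibly non-zero off-diagonal entries $u_{ij}$ (with $i>j$) occur for $i, j$ in the same block $X_s$, each such entry ranging freely over $\mathbb F_q$. Since the number of admissible positions inside a block of size $a_s$ is $\binom{a_s}{2}$, this already yields $|C_{U(n,q)}(d)| = q^{\sum_{i=1}^k \binom{a_i}{2}}$.

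For the isomorphism statement I would set up the restriction map $\Phi \colon C_{U(n,q)}(d) \to \prod_{s=1}^k U(a_s, q)$ sending $u$ to the tuple of its principal submatrices $(u_{ij})_{i,j \in X_s}$, each $X_s$ being identified order-isomorphically with $\{1, \ldots, a_s\}$. This map is clearly a bijection, as assembling arbitrary lower unitriangular blocks and filling the remaining off-diagonal positions with zeros produces exactly the elements described above. The essential point is that $\Phi$ is a group homomorphism: for $i, j \in X_s$ one has $(uv)_{ij} = \sum_k u_{ik} v_{kj}$, and a summand can be non-zero only if $u_{ik} \neq 0$ and $v_{kj} \neq 0$, which forces $d_i = d_k = d_j$ and hence $k \in X_s$; thus off-block indices contribute nothing and the restriction to $X_s$ is multiplicative. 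Consequently $C_{U(n,q)}(d) \cong \prod_{s=1}^k U(a_s, q)$, an isomorphism type depending only on the multiset of block sizes, i.e. only on $\delta$, which settles both assertions at once.

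The main obstacle is precisely the verification that $\Phi$ respects products, namely that multiplication inside the centralizer never mixes distinct blocks; everything else is bookkeeping. This is exactly where the support condition derived in the first step does the work, and I would isolate it as the key computation rather than grinding through the full matrix product.
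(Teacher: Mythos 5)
Your proof is correct, and it takes a genuinely more direct route than the paper. The paper first passes to a ``standard-form'' element $d$ whose equal diagonal entries are consecutive, identifies $C_{U}(d)$ there as the Sylow $p$-subgroup of $C_{GL_n(q)}(d)=\prod_i GL_{a_i}(q)$, and then handles an arbitrary $d\in D_\delta$ (obtained from the standard form by conjugating with a permutation matrix $b_\pi$) via a Sylow argument: it suffices to match orders, which the paper does by a dimension count on the span of the elementary matrices, using that exactly one of each pair $\{e_{ij},e_{ji}\}$, and likewise of $\{e_{\pi(i)\pi(j)},e_{\pi(j)\pi(i)}\}$, lies in the lower-triangular part. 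You instead characterize $C_{U(n,q)}(d)$ for an arbitrary $d$ outright by the support condition $u_{ij}=0$ whenever $d_i\neq d_j$, read off the order as $q^{\sum_i\binom{a_i}{2}}$ by counting within-block positions, and exhibit an explicit block-restriction isomorphism $\Phi\colon C_{U(n,q)}(d)\to\prod_s U(a_s,q)$, with the multiplicativity of $\Phi$ justified correctly by the observation that a nonzero term $u_{ik}v_{kj}$ forces $k$ into the same fiber as $i$ and $j$. Your argument is self-contained (no appeal to Sylow theory or to conjugacy of $p$-complements) and has the added benefit of directly proving the decomposition $C_\delta\cong\prod_{i=1}^k U(a_i,q)$, which the paper states without further justification and then uses in the proof of Theorem~B; the paper's route, by contrast, only needs to compare orders and so avoids verifying that the blockwise restriction is a homomorphism.
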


\begin{proof}
  Let   $\delta = (a_1, a_2, \ldots, a_k)$ be a given partition of $n$, with $k < q$, and let
  $$d=(\underbrace{d_1, \ldots, d_1}_{a_1}, \ldots, \underbrace{d_k, \ldots, d_k}_{a_k}), $$
  where $d_1, \ldots, d_k$ are distinct elements of $F_q$, be a 'standard-form' element in $D_{\delta}$.
  Write $G = GL_n(q)$, $U = U(n,q)$. It is well known that  $ C_G(d) = \prod_{i= 1}^k GL_{a_i}(q)$,  the subgroup of $\delta$-block matrices.

  Let $b = b_{\pi} \in G$ be a permutation matrix, where $\pi \in S_n$.
  We will show that
  $C_U(d^b)$ is isomorphic to $C_U(d)$. In order to do this, it is enough to show that the two subgroups have the same order, since  $C_U(d) = C_G(d) \cap U$ is
  a Sylow $p$-subgroup of $C_G(d)$ and $C_U(d^b) = C_G(d)^b \cap U \cong C_G(d) \cap U^{b^{-1}}$ is a $p$-subgroup.  
  We denote by $M =  \prod_{i= 1}^k M_{a_i}(q)$,  the  $\delta$-blocks matrix algebra and write $C = C_G(d)$.  We observe that $M \cap U = C \cap U$,
  $M^b \cap U = C^b \cap U$ and that, arguing by induction on the number $k$ of diagonal blocks, in order to prove that $|M \cap U| = |M^b \cap U|$ we can
  reduce to the case $k = 2$.

  Now,  $M \cap U = (I_n + V) \cap U  = I_n + (V \cap U_0)$, where $V$ is  the $F_q$-space spanned by the set of pairs of  elementary matrices
  $\{e_{i,j}, e_{j,i}\}$, where $1 \leq i < j \leq a_1$ or $a_1+1 \leq i < j \leq n$, and the $F_q$-space  $U_0$ is spanned by the $e_{i,j}$'s with $i < j$. 
  Observing that for every pair $i,j$  we have $e_{i,j}^b = e_{\pi(i), \pi(j)}$ and that the pair $\{e_{i,j}, e_{j,i}\}^b = \{e_{\pi(i_,\pi(j)}, e_{\pi(j),\pi(i)}\}$
  contains exactly one element in $V^b \cap U$, we conclude that the $F_q$-spaces $V^b \cap U_0$ and $V \cap U_0$ have the same dimension.
  Therefore,  $|C \cap U| = |M \cap U| = |M^b \cap U| = |C^b \cap U|$

  We finish by noticing that   $|C_{U(n,q)}(d)| = \prod_i q^{\binom{a_i}{2}}$ is independent on the choice of the elements $d_1, \ldots, d_k$ and that every
  element in $D_{\delta}$ is conjugate by a permutation matrix to a 'standard-form' element $d$ as above.
\end{proof}

Before proving Theorem~B, we recall some elementary facts.

For any fixed prime number $p$ and an element (of finite order) $g$ of a group $G$, we can write in a unique way  $g = x y$, where $x$ and $y$ commute,
$x$ is a $p$-element and $y$ has order coprime to $p$. We call $x$ the $p$-part of $g$ and $y$ the $p'$-part of $g$.

Also, if $g, h \in G$ are elements of coprime order and they commute, then $C_G(gh) = C_G(g) \cap C_G(h)$.

\begin{theorem}\label{formula}
Let $T = T(n, q)$, $U = U(n,q)$ and $D = D(n,q)$. Then, 
$$|T^p| = \sum_{\delta\  \vdash n} \mid D_{\delta}\mid [U: C_{\delta}] \mid C_{\delta}^p\mid $$
where the sum runs over all partitions $\delta$ of $n$ with length $\leq q-1$,  $D_{\delta} = \{d\in D \mid \tau(d)=\delta\}$ and $C_{\delta} = C_{U}(d_{\delta})$ for some $d_{\delta} \in D_{\delta}$. 
\end{theorem}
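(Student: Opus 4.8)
The plan is to count the elements of $T^p$ by organizing the $p$-power map according to the $p'$-part of the element being raised to the power. The key structural observation is that $T = U \rtimes D$, so every $g \in T$ factors as $g = u d$ with $u \in U$ and $d \in D$; since $U$ is the Sylow $p$-subgroup of $T$ and $D$ has order coprime to $p$ (as $\gcd(q-1,p)=1$), the $p'$-part of $g$ is conjugate in $T$ to a diagonal element. First I would show that raising to the $p$-th power interacts cleanly with this decomposition: the map $g \mapsto g^p$ sends $g$ with $p'$-part (conjugate to) $d$ to an element whose $p'$-part is (conjugate to) $d^p$, but since $d \in D$ has order prime to $p$, raising to the $p$-th power permutes $D$ bijectively. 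The upshot I want is that the fibers of the power map can be sorted by the $D$-type $\delta = \tau(d)$ of the $p'$-part, and within each sort the computation reduces to a $p$-power count inside the centralizer $C_U(d)$.

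Next I would make the reduction precise. Fix a diagonal $d$ of type $\delta$ and consider elements $g \in T$ whose $p'$-part is $T$-conjugate to $d$. Writing $g = xy$ with $x$ the $p$-part and $y$ the $p'$-part (commuting, $y$ conjugate to $d$), we have $g^p = x^p$ because $y^p$ contributes only to the $p'$-part and we are tracking the $p$-element $x^p$ up to the appropriate bookkeeping; more carefully, $g^p = x^p y^p$, and since $x \in C_T(y)$ with $x$ a $p$-element, $x$ lies in the Sylow $p$-subgroup $C_U(y)$ of $C_T(y)$. Thus the $p$-powers arising from elements with $p'$-part exactly $y$ are precisely $\{x^p \mid x \in C_U(y)\} \cdot y^p$, a coset of $C_U(y)^p$-shaped set translated by $y^p$. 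Because $C_U(y) \cong C_{\delta}$ for every $y$ of type $\delta$ (Lemma~\ref{cent-sizes}), the number of distinct $p$-power values contributed, per fixed $y$, is $|C_{\delta}^p|$. I would then argue these contributions are disjoint across distinct $p'$-parts $y$ and across distinct types, because an element of $T^p$ determines its own $p'$-part uniquely.

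The combinatorial heart is then a counting of how many distinct $y$'s of type $\delta$ occur. The $p'$-parts of type $\delta$ appearing as $p'$-parts of elements of $T$ are exactly the $T$-conjugates of the $|D_{\delta}|$ diagonal elements of type $\delta$; the number of $T$-conjugates of a single $d$ is the index $[T : C_T(d)]$. Since $C_T(d) = C_U(d) \rtimes C_D(d)$ and one checks $[T:C_T(d)] = [U : C_U(d)] = [U:C_\delta]$ after cancelling the diagonal part, each type $\delta$ contributes $|D_\delta| \cdot [U:C_\delta]$ distinct $p'$-parts $y$, each supplying $|C_\delta^p|$ distinct $p$-power values. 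Summing over all admissible $\delta$ (those with $l(\delta) \le q-1$, by the remark preceding Lemma~\ref{type-count}) yields the stated formula
$$|T^p| = \sum_{\delta \vdash n} |D_\delta|\, [U:C_\delta]\, |C_\delta^p|.$$

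The main obstacle I expect is verifying the disjointness and the exact bijection between the index sets, rather than any single inequality. Specifically, I must confirm that as $y$ ranges over all $[U:C_\delta]$ distinct $U$-conjugates of each diagonal $d \in D_\delta$ and $x$ ranges over $C_U(y)$, the values $x^p y^p$ are pairwise distinct exactly when they should be, i.e. that the only coincidences are the $|C_U(y)| / |C_U(y)^p|$-to-one collapse already accounted for inside each centralizer, and that no two different $y$'s yield overlapping $p$-power sets. This hinges on the uniqueness of the $p$-part/$p'$-part decomposition together with the fact that $y \mapsto y^p$ is injective on $p'$-elements, so the $p'$-part $y^p$ of the product recovers $y$; I would spell out this recovery argument carefully, as it is what makes the sum a genuine partition of $T^p$ rather than merely an upper bound.
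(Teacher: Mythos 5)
Your proof is correct and follows essentially the same route as the paper's: both rest on the $p$-part/$p'$-part decomposition, the injectivity of $y \mapsto y^p$ on $p'$-elements, conjugating the $p'$-part into $D$ via the $p$-complement structure of $T$, and the fact that $D \cap U = 1$ forces distinct diagonal $p'$-parts to give disjoint contributions. The only difference is bookkeeping: you partition $T$ by the $p'$-part and count the image of each fiber directly, whereas the paper parametrizes $T^p$ by the $U$-conjugacy classes $(dv)^U$ with $d \in D$, $v \in C_U(d)^p$ and sums their sizes via $C_U(dv) = C_{C_U(d)}(v)$ --- the two computations coincide term by term.
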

\begin{proof}
  We first prove that
  \begin{equation}\label{UT}
T^p = \bigcup_{d\in D, v \in C_U(d)^p} (dv)^U 
\end{equation}
where $(dv)^U = \{ (dv)^y \mid y \in U\}$ is an orbit under the action by conjugation of $U$; we call it a $U$-class.

To prove~(\ref{UT}), let us consider an element $x = dv$ on the right hand side where $d \in D$ and $v= u^p$ for some $u \in C_{U}(d)$.
Since $(p, |D|) = 1$ we can write $d = d_0^p$ for some suitable $d_0 \in D$; note that  $u \in C_U(d_0) = C_U(d)$. Hence, $x =  d_0^pu^p = (d_0u)^p \in T^p$.
Since $T^p$ is $U$-invariant,  this proves that $T^p$ contains the union on the right side of~(\ref{UT}).
Conversely, consider $x = t^p$ with $t \in T$. Now  write $t =  d_0u$ where $u$ and $d_0$ are the $p$-part and $p'$-part of $t$, respectively.
So, in particular,  $u$ and $d_0$ commute. Let $y \in U$ such that $d_0^y \in D$ (such an element certainly exists, as $D$ is a $p$-complement of $T$ and the $p$-complements of $T$ are a single orbit under the action of $U$).
Write $d_1 = d_0^y$ and $v = u^y$. Then $x^y = (t^p)^y = (t^y)^p = ((d_0u)^y)^p = (d_1v)^p =  d_1^pv^p$. Now $d = d_1^p \in D$ and $C_U(d) = C_U(d_1)$. This proves the other inclusion. 

Next, we observe that for elements $u,v \in U$ and $c,d \in D$ which satisfy $[u, c] = 1 = [v,d]$ and $(cu)^U = (dv)^U$ then $c = d$. Let $y \in U$ such that $x = dv = c^yu^y$.
Note that $v$ and $d$ are the $p$-part and $p'$-part of $x$, respectively, and that the same is true for $u^y$ and $c^y$.
By uniqueness of $p$ and $p'$-parts, we hence get $v = u^y$ and $d = c^y$. In particular, $c^{-1}d = c^{-1}c^y = [c,y] \in D \cap U = 1$, so $c = d$.  

We also  have that if  $cu \in (dv)^U$,  for $c,d \in D$, $u \in U$ and $v \in C_U(d)^p$, then  $cu = du  = dv^y$ for some $y \in C_G(d)$.
Therefore, the family of $U$-classes in $T^p$ is in bijection with the set of pairs $(d, v_{d, i})$, where $d\in D$ and $v_{d, 1}, \ldots, v_{d, m_d}$ is a set of representatives
of the $C_U(d)$-classes in $C_U(d)^p$. 
For a fixed $d \in D$, write $C = C_U(d)$ and let $v \in C^p$. Observe that 
$C_U(dv) = C_U(d) \cap C_U(v) = C_C(v)$, because $d$ and $v$ are commuting elements of coprime order,  so 
$|(dv)^U| =  |U:C||v^{C}|$.
Hence,  we have  
$$\mid \bigcup_{ v \in C^p} (dv)^U  \mid = [U: C] \sum_{i = 1}^{m_d} \mid v_{d,i}^{C}\mid = [U:C]\mid C^p \mid .$$

By  Lemma~\ref{cent-sizes} we conclude that 

$$\mid T^p \mid = 
\sum_{d \in D} [U:C_U(d)]|C_U(d)^p| = 
\sum_{\delta\ \vdash n, l(\delta) < q} \mid D_{\delta}\mid [U: C_{\delta}] \mid C_{\delta}^p\mid $$
where $C_{\delta} = C_U(d_{\delta})$ for any fixed  $d_{\delta} \in D_{\delta}$.
\end{proof}

We will now prove Theorem~B.

\begin{proof}[Proof of the Theorem~B]
Proof the the theorem is obtained by simply substituting the values in the formula obtained the Theorem~\ref{formula} above. The value of $|D_{\delta}|$ is computed in the Lemma~\ref{type-count}. The value of $[U:C_{\delta}]$ is obtained from Lemma~\ref{cent-sizes}. Now to obtain the last term $|C_{\delta}^p|$ we use the fact that $C_{\delta}\cong \prod_{i=1}^k U(a_i,q)$. This completes the proof.
\end{proof}

Next, we apply the formula obtained in Theorem~B to compute some examples.

\begin{example}
Let $n = 3$, $q = 5 = p$. Let $T = T(3,5)$ and we want to compute $|T^5|$. 
In this case the partitions $\delta$  such that  $1 \leq l(\delta) \leq \min(n, q-1) = 3$)  are $(3)$, $(2,1)$ and $(1,1,1)$.
Now, $|\Delta_{(3)}|=4$, $|\Delta_{(2,1)}| = 3 \cdot (4\cdot 3) = 2^23^2$ and $|\Delta_{(1,1,1)}| = 4 \cdot 3 \cdot 2 = 2^33$.
Further $|d^T| = [U:C_U(d)]$ is, according to type, as follows:   $1$ for $(5)$, $5^2$ for $(2,1)$ and $5^3$ for $(1,1,1)$.
Hence,
$$|T^5| = 4\cdot 1 +  2^23^2\cdot 5^2 + 2^33\cdot 5^3 = 3904.$$
\end{example}

\begin{example} Let $n = 6$, $q = p = 3$,  $T = T(6,3)$,   $D =D(6,3)$ and $U = U(6,3)$.  

  The partitions $\delta$ of $6$ of length at most two are
  $(6), (5,1), (4, 2), (3,3)$ and for $d_{\delta} \in D_{\delta}$ we have the following

  \begin{center}
    \begin{tabular}{|c|c|c|c|c|}
      \hline
      & $(6)$  & $(5, 1)$ & $(4,2)$  & $(3,3)$ \\
      \hline
   $|D_{\delta}|$   & $2$ & $12$ & $30$  & $20$ \\
            \hline
      $[U: C_U(d_{\delta})]$  & $1$ & $3^5$ &  $3^8$ & $3^9$ \\
            \hline
            
        $|C_U(d_{\delta})^3|$ & $585$ & $3^3$ &  $3$ & $1$ \\
            \hline
            
    \end{tabular}
    \end{center}
  where we have used the fact that $C_U(d_{(6)}) \cong U(6,3)$ and that
  $|U(6, 3)^3| = 585$ (by direct computation).

  Hence, we get
$$|T^3| = 2\cdot 585 + 12 \cdot 3^5 \cdot 3^3 + 30 \cdot 3^8 \cdot 3 + 20 \cdot 3^9 = 1064052 .$$
\end{example}

We finish by proving Corollary~C.

\begin{proof}[Proof of Corollary~C]
  We will consider just  the partitions (of length $2$) $(n-i, i)$ for $1\leq i \leq \lfloor{n/2}\rfloor$.
  Hence, by Theorem~A and Theorem~B we have 
\begin{eqnarray*}
|T^p| & \geq& \sum_{i=1}^{\lfloor {n/2}\rfloor} \frac{(q-1)(q-2)n!}{2(n-i)!i!}.|U(i,q)^p||U(n-i,q)^p|\cdot q^{{n\choose 2} - {n-i \choose 2} - {i\choose 2}}\\
&\geq & \frac{1}{3^2}\sum_{i=1}^{\lfloor {n/2}\rfloor} \frac{(q-1)(q-2)}{2}{n\choose i}\cdot q^{{n\choose 2} - {n-i \choose 2} - {i\choose 2}+{i-p+1 \choose 2} + {n-i-p+1 \choose 2}}\\
&=& \frac{1}{3^2}\sum_{i=1}^{\lfloor {n/2}\rfloor} \frac{(q-1)(q-2)}{2}{n\choose i}\cdot q^{{n\choose 2} - (p-1)(n-p)} \\
&=& \frac{2^{n-1}}{3^2} (q-1)(q-2). q^{{n\choose 2} - (p-1)(n-p)}
\end{eqnarray*}
Hence,
$$\frac{|T^p|}{|T|}\geq \frac{2^{n-2} (q-1)(q-2)}{9(q-1)^n q^{ (p-1)(n-p)}} \geq  \frac{2^{n-2}}{9(q-1)^{n-2} q^{ (p-1)(n-p)}}.$$
\end{proof}

%%%%%%%%%%%%%%%%%%%%%%%%%%%%%%%%%%%%%%

\end{document}